\theoremstyle{plain} 
\newtheorem{theorem}{Theorem}[section]  
\newtheorem{lemma}[theorem]{Lemma}
\newtheorem{corollary}[theorem]{Corollary}
\newtheorem{proposition}[theorem]{Proposition}
\newtheorem{sublemma}[theorem]{Sublemma}
\theoremstyle{definition} 
\newtheorem{definition-lemma}[theorem]{Definition-Lemma}
\newtheorem{remark}[theorem]{Remark}
\newtheorem{example}[theorem]{Example}
\newcommand{\affine}{\mathbb{C}}
\newcommand{\const}{\mathrm{const}}
\newcommand{\norm}[1]{\left|\!\left|#1\right|\!\right|}
\newcommand{\supp}{\mathrm{supp}}
\newcommand{\diam}{\mathrm{Diam}}
\newcommand{\moduli}{\mathcal{M}}
\newcommand{\widim}{\mathrm{Widim}}
\newcommand{\dist}{\mathrm{dist}}
\newcommand{\area}{\mathrm{Area}}
\begin{document}

\title[Brody curves and mean dimension]
{Brody curves and mean dimension} 

\author[S. Matsuo and M. Tsukamoto]{Shinichiroh Matsuo and Masaki Tsukamoto}

\subjclass[2010]{32H30, 54H20}

\keywords{Brody curve, mean dimension, deformation theory}

\thanks{Shinichiroh Matsuo was supported by Grant-in-Aid for JSPS fellows (23$\cdot$149), and
Masaki Tsukamoto was supported by Grant-in-Aid for Young Scientists (B) (21740048).}

\date{\today}

\maketitle

\begin{abstract}
We study the mean dimensions of the spaces of Brody curves.
In particular we give the formula of the mean dimension of the space of Brody curves in 
the Riemann sphere.
A key notion is a non-degeneracy of Brody curves introduced by Yosida (1934).
We develop a deformation theory of non-degenerate Brody curves
and apply it to the calculation of the mean dimension.
Moreover we show that 
there are sufficiently many non-degenerate Brody curves.
\end{abstract}


\section{Introduction} \label{section: introduction}
\subsection{Main results}
Let $z=x+y\sqrt{-1}\in \affine$ be the standard coordinate in the complex plane $\affine$.
Let $f = [f_0:f_1:\dots:f_N]:\affine \to \affine P^N$ be a holomorphic map ($f_i$: holomorphic function).
We define $|df|(z)\geq 0$ by 
\[ |df|^2(z) := \frac{1}{4\pi}\Delta\log(|f_0|^2+|f_1|^2+\dots+|f_N|^2) \quad 
   \left(\Delta := \frac{\partial^2}{\partial x^2}+\frac{\partial^2}{\partial y^2}\right).\]
$|df|(z)$ is classically called a spherical derivative. It evaluates the dilatation of the map $f$ 
with respect to the Euclidean metric on $\affine$ and the Fubini-Study metric on $\affine P^N$.
(See the equation (\ref{eq: spherical derivative is dilatation}) in Section 
\ref{subsection: perturbation of the Hermitian metric}.)

A holomorphic map $f:\affine \to \affine P^N$ is called a Brody curve (\cite{Brody}) if it satisfies 
$|df|(z)\leq 1$ for all $z\in \affine$.
Let $\moduli(\affine P^N)$ be the space of Brody curves in $\affine P^N$.
It is endowed with the compact-open topology (the topology of uniform convergence on compact subsets):
A sequence of Brody curves $\{f_n\}\subset \moduli(\affine P^N)$ converges to $f\in \moduli(\affine P^N)$
if and only if for any compact subset $K\subset \affine$ we have 
$\sup_{z\in K}d(f_n(z),f(z))\to 0$ as $n\to \infty$.
($d(\cdot, \cdot)$ is the distance on $\affine P^N$ with respect to the Fubini-Study metric.)
$\moduli(\affine P^N)$ is an infinite dimensional compact metrizable space, and 
it admits
the following continuous $\affine$-action.
\[ \moduli(\affine P^N)\times \affine \to \moduli(\affine P^N), \quad (f(z), a)\mapsto f(z+a).\]
 The main purpose of the paper is to study the mean dimension $\dim(\moduli(\affine P^N):\affine)$ 
of this infinite dimensional dynamical system.
(Mean dimension is an invariant of topological dynamical systems introduced by Gromov \cite{Gromov}.
We review its definition in Section \ref{subsection: review of mean dimension}.)

Let $f:\affine \to \affine P^N$ be a Brody curve.
We define an energy density $\rho(f)$ by setting 
\begin{equation} \label{eq: definition of energy density}
 \rho(f) := \lim_{R\to \infty}\frac{1}{\pi R^2}\left(\sup_{a\in \affine}
    \int_{|z-a|< R} |df|^2 dxdy\right).
\end{equation}
(This limit always exists. See Section \ref{subsection: energy density}.)
We define the Nevanlinna-Shimizu-Ahlfors characteristic function $T(r,f)$ $(r\geq 1)$ by
\[ T(r, f) := \int_1^r\left(\int_{|z|<t}|df|^2dxdy\right)\frac{dt}{t}.\]
From the Brody condition $|df|\leq 1$, we have $T(r, f)\leq \pi r^2/2$.
We define $\rho_{\mathrm{NSA}}(f)$ by 
\[ \rho_{\mathrm{NSA}}(f) := \limsup_{r\to \infty}\frac{2}{\pi r^2}T(r,f).\]
It is easy to see
$\rho_{\mathrm{NSA}}(f)\leq \rho(f)$.

Let $\rho(\affine P^N)$ be the supremum of $\rho(f)$ over $f\in \moduli(\affine P^N)$, and 
let $\rho_{\mathrm{NSA}}(\affine P^N)$ be the supremum of 
$\rho_{\mathrm{NSA}}(f)$ over $f\in \moduli(\affine P^N)$.
We know (see Section \ref{subsection: energy density})
\[ 0 < \rho_{\mathrm{NSA}}(\affine P^N) \leq \rho(\affine P^N) <1 .\]

The main result of this paper is the following:
\begin{theorem} \label{main theorem}
\[ 2(N+1)\rho(\affine P^N)\leq \dim(\moduli(\affine P^N):\affine)\leq 4N\rho_{\mathrm{NSA}}(\affine P^N).\]
\end{theorem}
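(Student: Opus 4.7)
The strategy is to prove the two inequalities independently: the lower bound by constructing large-dimensional families of Brody curves via deformation theory, and the upper bound by approximating Brody curves on large disks using Nevanlinna-theoretic estimates.

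For the lower bound $2(N+1)\rho(\affine P^N)\leq \dim(\moduli(\affine P^N):\affine)$, I would first select a non-degenerate Brody curve $g$ whose energy density $\rho(g)$ is close to $\rho(\affine P^N)$; existence of such a $g$ is guaranteed by the abundance result for non-degenerate Brody curves promised in the abstract. The deformation theory would then supply, for each large $R$, a parameter space of perturbations of $g$ supported in $D_R=\{|z|<R\}$ that remain genuine Brody curves. The dimension count is that each non-degenerate point contributes $N+1$ complex deformation directions, and these points have asymptotic density $\rho(g)$, giving a parameter space of real dimension $2(N+1)\rho(g)\pi R^{2}+o(R^{2})$. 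I would then check that this parameterization embeds $\epsilon$-faithfully into $\moduli(\affine P^N)$ under the uniform topology on $D_R$, so that $\widim_\epsilon$ of the restriction dominates $2(N+1)\rho(g)\pi R^{2}$. Dividing by $\pi R^{2}$, letting $R\to\infty$ and then $\rho(g)\to \rho(\affine P^N)$ yields the lower bound.

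For the upper bound $\dim(\moduli(\affine P^N):\affine)\leq 4N\rho_{\mathrm{NSA}}(\affine P^N)$, the approach is approximation. For each $\epsilon>0$ and large $R$, I would construct a continuous map from $\moduli(\affine P^N)$ into a finite-dimensional space $Y$ with $\dim Y \leq 4N\rho_{\mathrm{NSA}}(\affine P^N)\pi R^{2}+o(R^{2})$, such that the image controls each $f$ up to error $\epsilon$ uniformly on $D_R$. The key input is the Nevanlinna bound $T(R,f)\lesssim \rho_{\mathrm{NSA}}(\affine P^N)\pi R^{2}/2$: writing $f=[f_0:\dots:f_N]$, each meromorphic ratio $f_i/f_0$ has zeros and poles on $D_R$ bounded in number by a multiple of $T(R,f)$, and a rational approximation of each $f_i/f_0$ consumes on the order of $4$ real parameters per zero-pole pair; summing over the $N$ ratios and dividing by $\pi R^{2}$ produces $4N\rho_{\mathrm{NSA}}(\affine P^N)$ as the per-area dimensional budget, which controls $\widim_\epsilon$ and hence the mean dimension.

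The main obstacle lies on the lower-bound side: the deformation theory must produce perturbations that (i) genuinely satisfy $|df|\leq 1$ pointwise, not merely to first order, and (ii) remain mutually $\epsilon$-separated in the sup topology on $D_R$ as the parameters vary over a space whose dimension grows linearly in the area. Controlling the perturbation globally on a large disk while preserving the Brody bound requires delicate metric estimates, and is precisely where Yosida's non-degeneracy condition enters essentially. The upper bound, by contrast, should reduce to fairly standard (albeit technically demanding) value-distribution arguments once the $T(r,f)$ estimate is available.
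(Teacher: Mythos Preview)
Your overall architecture matches the paper's: the upper bound is indeed quoted from the earlier Nevanlinna-theoretic result, and the lower bound is obtained by passing to a non-degenerate curve (Theorem~\ref{thm: there are many non-degenerate curves}) and then invoking a deformation argument (Theorem~\ref{thm: mean dimension around non-degenerate curves}). Two points in your sketch, however, would not go through as written.

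First, the phrase ``perturbations of $g$ supported in $D_R$'' is a genuine problem. Holomorphic sections of $g^*T\affine P^N$ cannot be compactly supported unless they vanish identically, so there is no such parameter space to speak of. What the paper actually does (Proposition~\ref{prop: structure of H_f}) is construct, for each $R$-square $\Lambda$, a finite-dimensional subspace $V\subset H_g$ of \emph{globally bounded} holomorphic sections which are merely \emph{concentrated} on $\Lambda$ in the sense that $\norm{u}_{L^\infty(\affine)}\le 2\norm{u}_{L^\infty(\Lambda)}$. The construction is indirect: one first builds genuinely compactly supported but only \emph{approximately} holomorphic sections via Riemann--Roch on the torus $\affine/R\mathbb{Z}^2$ (this is where the factor $N+1$ enters, as $c_1(g^*T\affine P^N)=(N+1)|dg|^2\,dxdy$, not via any notion of ``non-degenerate points''), and then corrects the $\bar\partial$-error using a bounded right inverse of $\bar\partial$ whose existence relies precisely on non-degeneracy. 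Your heuristic ``each non-degenerate point contributes $N+1$ directions'' gives the right constant but obscures the actual mechanism.

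Second, you omit a small but necessary reduction. The deformation theorem requires $\norm{dg}_{L^\infty(\affine)}<1$ strictly, and the abundance theorem for non-degenerate curves takes as input a curve already satisfying $\norm{df}_{L^\infty(\affine)}<1$. A general $f\in\moduli(\affine P^N)$ need not satisfy this; the paper handles it by the rescaling $f_c(z)=f(cz)$ for $0<c<1$, which gives $\norm{df_c}_{L^\infty}\le c<1$ and $\rho(f_c)=c^2\rho(f)$, and then lets $c\to 1$ at the end.
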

\begin{corollary} \label{main corollary}
\[ \dim(\moduli(\affine P^1):\affine) = 4\rho(\affine P^1) = 4\rho_{\mathrm{NSA}}(\affine P^1).\]
\end{corollary}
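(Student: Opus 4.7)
The corollary should follow immediately from Theorem \ref{main theorem} by specializing to $N=1$ and observing that the numerical factors collide. The plan is therefore very short.

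The key observation is that when $N=1$, the two coefficients in the main theorem coincide: $2(N+1) = 4 = 4N$. So Theorem \ref{main theorem} reads
\[ 4\rho(\affine P^1)\leq \dim(\moduli(\affine P^1):\affine)\leq 4\rho_{\mathrm{NSA}}(\affine P^1). \]
I would then invoke the trivial inequality $\rho_{\mathrm{NSA}}(f)\leq \rho(f)$ (which was noted in the excerpt right after the definitions), taking the supremum over $f\in\moduli(\affine P^1)$ to get $\rho_{\mathrm{NSA}}(\affine P^1)\leq \rho(\affine P^1)$. Chaining this with the above display yields
\[ 4\rho(\affine P^1)\leq \dim(\moduli(\affine P^1):\affine)\leq 4\rho_{\mathrm{NSA}}(\affine P^1) \leq 4\rho(\affine P^1), \]
so equality must hold throughout. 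This simultaneously gives $\dim(\moduli(\affine P^1):\affine) = 4\rho(\affine P^1)$ and, as a bonus, $\rho_{\mathrm{NSA}}(\affine P^1) = \rho(\affine P^1)$, which is exactly the content of the corollary.

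Since the argument is purely formal given Theorem \ref{main theorem}, there is no real obstacle here; the substantive work has been pushed entirely into the main theorem. In particular, the hard part lives in the \emph{lower bound} direction of Theorem \ref{main theorem}, which requires the deformation theory of non-degenerate Brody curves and the existence of sufficiently many such curves promised in the abstract. The corollary itself is simply the pleasant collapse of constants in the rank-one case.
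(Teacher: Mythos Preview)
Your proof is correct and matches the paper's own argument essentially verbatim: specialize Theorem \ref{main theorem} to $N=1$ so that both coefficients become $4$, and then use $\rho_{\mathrm{NSA}}(\affine P^1)\leq \rho(\affine P^1)$ to force all the inequalities to collapse.
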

From Theorem \ref{main theorem}, 
$4\rho(\affine P^1)\leq \dim(\moduli(\affine P^1):\affine)\leq 4\rho_{\mathrm{NSA}}(\affine P^1)$.
Since $\rho_{\mathrm{NSA}}(\affine P^1) \leq \rho(\affine P^1)$, we get the corollary.

The formula $\dim(\moduli(\affine P^1):\affine) = 4\rho_{\mathrm{NSA}}(\affine P^1)$ was conjectured in 
\cite[p. 1643, (4)]{Tsukamoto-ETDS}.
This formula is very surprising (at least for the authors) because the definitions of the left-hand-side and
the right-hand-side are totally different.

The upper bound $\dim(\moduli(\affine P^N):\affine)\leq 4N\rho_{\mathrm{NSA}}(\affine P^N)$ was already
proved in \cite[Theorem 1.5]{Tsukamoto-Nagoya} by using the Nevanlinna theory.
(Remark:  We used the notation $e(f)$ for $\rho_{\mathrm{NSA}}(f)$ in \cite{Tsukamoto-Nagoya}.)
The purpose of the present paper is to prove the lower bound 
$\dim(\moduli(\affine P^N):\affine)\geq 2(N+1)\rho(\affine P^N)$.

\subsection{Non-degenerate Brody curves} \label{subsection: non-degenerate Brody curves}
For $a\in \affine$ and $r>0$ we set $D_r(a) := \{z\in \affine|\, |z-a|\leq r\}$.
The following is a key-notion of the paper.
This notion was first introduced by Yosida \cite{Yosida}. 
(Gromov \cite[p. 399]{Gromov} also discussed it in a more general situation.
See also Eremenko \cite[Section 4]{Eremenko} and Remark \ref{remark: yosida function} below.)
\begin{definition-lemma} \label{defintion-lemma: non-degenerate}
Let $f:\affine \to \affine P^N$ be a Brody curve.
Then the following two conditions are equivalent.

\noindent 
(i) Any constant curve does not belong to the closure of the $\affine$-orbit of $f$.
(In other words, for any sequence of complex numbers $\{a_n\}_{n\geq 1}$, the sequence of 
Brody curves $\{f(z+a_n)\}_{n\geq 1}$ does not converge to a constant curve.)

\noindent 
(ii) There exist $\delta>0$ and $R>0$ such that for all $a\in \affine$ we have 
 $\norm{df}_{L^\infty(D_R(a))}\geq \delta$.

$f$ is said to be non-degenerate if it satisfies one of (and hence both) the above conditions.
\end{definition-lemma}
\begin{proof}
The following argument is given in \cite{Yosida}.
Suppose that the condition (ii) fails.
Then for any $n\geq 1$ there is $a_n\in \affine$ such that $\norm{df}_{L^\infty(D_1(a_n))}\leq 1/n$.
Taking a subsequence, we can assume that the sequence $\{f(z+a_n)\}_{n\geq 1}$ converges to a
Brody curve $g(z)$. Then $\norm{dg}_{L^\infty(D_1(0))} =0$.
This implies that $g$ is a constant curve.

Suppose the condition (ii) holds.
Let $\{a_n\}_{n\geq 1}$ be a sequence of complex numbers.
If $\{f(z+a_n)\}_{n\geq 1}$ converges to $g(z)$, then $\norm{dg}_{L^\infty(D_R(0))}\geq \delta$.
Hence $g(z)$ is not a constant curve. This proves the condition (i).
\end{proof}
\begin{remark}  \label{remark: yosida function}
The above argument also proves that the conditions in Definition-Lemma \ref{defintion-lemma: non-degenerate}
are equivalent to the following:

\noindent 
(ii') For any $R>0$ there exists $\delta>0$ such that for all $a\in \affine$ we have 
$\norm{df}_{L^\infty(D_R(a))}\geq \delta$.

Yosida \cite[Theorem 4]{Yosida} proved (i) $\Leftrightarrow$ (ii') for the case of $N=1$.
In \cite{Yosida} Brody curves $f:\affine \to \affine P^1$ satisfying (i) are called 
meromorphic functions of 1st category.
In Eremenko \cite[Section 4]{Eremenko} Brody curves $f:\affine \to \affine P^N$ satisfying (i) are called 
binormal curves.
Gromov \cite[p. 399]{Gromov} used the terminology ``uniformly nondegenerate''.
\end{remark}
\begin{example} \label{example: non-degeneate Brody curve}
$f(z)=e^z\in \moduli(\affine P^1)$ is a degenerate (i.e. not non-degenerate) Brody curve.
A non-constant elliptic function $f(z)\in \moduli(\affine P^1)$ is a non-degenerate Brody curve.
\end{example}
In our viewpoint, non-degenerate Brody curves are ``non-singular points" of the space $\moduli(\affine P^N)$,
and they behave very nicely for the calculation of the mean dimension:
\begin{theorem} \label{thm: mean dimension around non-degenerate curves}
Let $f:\affine \to \affine P^N$ be a non-degenerate Brody curve with $\norm{df}_{L^\infty(\affine)}<1$.
Then 
\[ \dim(\moduli(\affine P^N):\affine)\geq 2(N+1)\rho(f).\]
\end{theorem}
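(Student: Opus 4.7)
The plan is to establish the lower bound through the $\widim_\epsilon$-characterisation of mean dimension: for each $\epsilon>0$ I will exhibit, for every large $R$, a continuous parametrised family $\{g_{\mathbf v}\}_{\mathbf v\in\mathcal T_R}\subset\moduli(\affine P^N)$ whose restriction to $D_R$ is $\epsilon$-injective in the compact-open topology, with $\dim_{\mathbb R}\mathcal T_R$ growing at least like $2(N+1)\rho(f)\pi R^2$. Dividing by $\pi R^2$ and then sending $R\to\infty$ and $\epsilon\to 0$, the F{\o}lner-averaged widim formula for $\dim(\moduli(\affine P^N):\affine)$ yields the claimed inequality.

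To build $\mathcal T_R$, fix a holomorphic lift $F=(f_0,\dots,f_N):\affine\to\affine^{N+1}\setminus\{0\}$ of $f$. From the definition of $\rho(f)$, for any $\eta>0$ and every sufficiently large $R$ one may translate $f$ so that $\int_{D_R}|df|^2\,dxdy\geq(\rho(f)-\eta)\pi R^2$. Next, choose a finite set $P_R\subset D_R$ of ``deformation sites'' with $|P_R|/(\pi R^2)\to\rho(f)$, spaced on a scale much larger than $1$ (to prevent local interference) and much smaller than $R$. At each $p\in P_R$ attach an entire bump $\chi_p$---a suitably normalised Weierstrass-type product of uniform size near $p$ and rapidly decaying elsewhere on $D_R$---together with a parameter $v_p\in\affine^{N+1}$ of norm at most a small $\delta>0$; then set
\[
F_{\mathbf v}(z)\;=\;F(z)+\sum_{p\in P_R}\chi_p(z)\,v_p,\qquad g_{\mathbf v}\;:=\;[F_{\mathbf v}]:\affine\to\affine P^N.
\]
The parameter set $\mathcal T_R=\prod_{p\in P_R}\{v\in\affine^{N+1}:|v|\leq\delta\}$ has real dimension $2(N+1)|P_R|$. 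The factor $2(N+1)$, in place of the naive $2N$, appears because the projective gauge $\affine^*$ is a \emph{global} equivalence only: each site independently contributes its full $\affine^{N+1}$ of perturbation directions, and the lone $\affine^*$-constraint is swallowed in the $R\to\infty$ asymptotics.

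Two verifications then remain. First, each $g_{\mathbf v}$ must be a genuine Brody curve: the hypothesis $\norm{df}_{L^\infty(\affine)}<1$ furnishes a uniform margin $1-\norm{df}_{L^\infty(\affine)}>0$, so choosing $\delta$ and the $C^1$-size of $\sum_p\chi_p v_p$ small enough forces $|dg_{\mathbf v}|\leq 1$ globally and keeps $F_{\mathbf v}$ nowhere zero---this is precisely the deformation theory of non-degenerate Brody curves foreshadowed in the abstract. Second, the family must be $\epsilon$-separated on $D_R$: whenever $\mathbf v\neq\mathbf v'$, pick a site $p$ with $v_p\neq v_p'$; near $p$ the difference $F_{\mathbf v}-F_{\mathbf v'}$ is essentially $\chi_p(v_p-v_p')$, and non-degeneracy of $f$ together with $\norm{df}_{L^\infty(\affine)}<1$ (via Definition-Lemma~\ref{defintion-lemma: non-degenerate} and Remark~\ref{remark: yosida function}) keeps $|F|$ bounded above and below on $\affine$, converting this $\affine^{N+1}$-difference into a Fubini--Study separation of at least $c|v_p-v_p'|$ with $c>0$ independent of $R$.

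The main obstacle lies in reconciling the opposing demands of the two verifications: the Brody inequality asks the cumulative perturbation to be globally small, while quantitative $\epsilon$-injectivity asks each bump to register a definite local amplitude. Engineering entire functions $\chi_p$ that are simultaneously localised enough that perturbations at different sites do not interfere---neither accumulating to break $|df|\leq 1$ nor cancelling to destroy separation---of uniform local amplitude, and compatible with a site density approaching $\rho(f)$ as $R\to\infty$, is the analytic heart of the argument. Non-degeneracy of $f$ is the structural input that makes this possible: via condition (ii') of Remark~\ref{remark: yosida function} it supplies the uniform transversality needed for the assignment $\mathbf v\mapsto g_{\mathbf v}|_{D_R}$ to be the required $\epsilon$-embedding of $\mathcal T_R$.
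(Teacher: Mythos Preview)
Your approach has several gaps that cannot be repaired as stated. The ``entire bump'' functions $\chi_p$ you describe do not exist: you want $\chi_p$ entire, of uniform amplitude near $p$, and rapidly decaying elsewhere on $D_R$, but by the maximum principle any entire function that is small on $\partial D_R$ and large near an interior point $p$ must grow without bound outside $D_R$, destroying the Brody condition $|dg_{\mathbf v}|\leq 1$ globally regardless of how small $\delta$ is. Equally fatal is the claim that non-degeneracy keeps $|F|$ bounded above and below on $\affine$: if $|F|=(\sum|f_i|^2)^{1/2}$ were bounded, each component $f_i$ would be a bounded entire function, hence constant by Liouville, and $f$ itself would be constant. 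Thus neither the Brody verification nor the conversion of $\affine^{N+1}$-differences into Fubini--Study separations can proceed as you suggest.

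More structurally, you never explain why the admissible number of deformation sites should be $\rho(f)\pi R^2$; you simply declare ``choose $P_R$ with $|P_R|/(\pi R^2)\to\rho(f)$'' without any mechanism tying site density to the energy density of $f$. In the paper this link is forged by index theory: the parameter space is a ball in the Banach space $H_f$ of bounded holomorphic sections of $f^*T\affine P^N$, and the dimension bound $\dim_{\affine}V\geq(N+1)\int_\Lambda|df|^2\,dxdy-C_fR$ comes from Riemann--Roch on a torus compactification of the $R$-square $\Lambda$ (Proposition~\ref{prop: structure of H_f}), via the K\"ahler--Einstein identity relating $c_1(T\affine P^N)$ to $(N+1)$ times the Fubini--Study class. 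The Brody condition for the deformed curves is then secured not by localised smallness of bumps but by an implicit function theorem resting on a right inverse of $\bar\partial$, itself built from the uniform curvature positivity that non-degeneracy supplies (Propositions~\ref{prop: result of deformation} and~\ref{prop: small deformation is unobstructed}). Your sketch engages with neither of these mechanisms.
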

The following theorem means that there are ``sufficiently many'' non-degenerate Brody curves:
\begin{theorem}\label{thm: there are many non-degenerate curves}
Let $f:\affine \to \affine P^N$ be a holomorphic map with $\norm{df}_{L^\infty(\affine)} <1$.
Then for any $\varepsilon>0$ there exists a non-degenerate Brody curve $g:\affine \to \affine P^N$ 
satisfying $\norm{dg}_{L^\infty(\affine)} < 1$ and $\rho(g)\geq \rho(f)-\varepsilon$.
\end{theorem}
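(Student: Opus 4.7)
If $f$ is already non-degenerate we simply take $g=f$, so we may assume otherwise. The strict inequality $\norm{df}_{L^\infty(\affine)}<1$ gives a positive slack $\kappa:=1-\norm{df}_{L^\infty(\affine)}>0$, and the plan is to exploit this slack by adding a small non-degenerate perturbation to $f$ without destroying the Brody condition or appreciably changing the energy density.

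First I would construct a reference non-degenerate Brody curve $\psi:\affine\to\affine P^N$ with $\norm{d\psi}_{L^\infty(\affine)}$ as small as needed. A concrete model is $\psi(z)=[1:\eta\,\wp(z):0:\cdots:0]$ for a Weierstrass elliptic function $\wp$ attached to a lattice $\Lambda\subset\affine$ and a small $\eta>0$; by Example \ref{example: non-degeneate Brody curve}, the double-periodicity and non-constancy of $\wp$ make $\psi$ non-degenerate, with explicit quantitative control on the relevant $R$ and $\delta$ in terms of $\Lambda$ and $\eta$.

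Next I would form $g$ by a projective perturbation of $f$ by $\psi$. Choose an entire homogeneous lift $F=(f_0,\ldots,f_N)$ of $f$ with no common zero (which exists because $\affine$ is Stein and common factors can be divided out) and an analogous lift $\Psi$ of $\psi$ built from theta functions, and set
\[
 G(z):=F(z)+\epsilon\,\mu(z)\,\Psi(z),\qquad g(z):=[G(z)],
\]
where $\mu$ is an entire scalar factor chosen so that the perturbation term stays of the same projective magnitude as $F$ (that is, $|\mu\Psi|$ remains comparable to $|F|$ wherever $|F|$ grows large). I would then verify three properties: (a) the strict Brody bound $\norm{dg}_{L^\infty(\affine)}<1$, which follows from smallness of $\epsilon$ together with the slack $\kappa$ and $\norm{d\psi}_\infty\ll\kappa$; (b) non-degeneracy of $g$, because on every disk containing a full $\Lambda$-period of $\wp$ the $\epsilon\mu\Psi$ term forces $|dg|$ to be at least some fixed $\delta>0$ at some point, the $\wp$-oscillation not being cancellable by the Brody-slow variation of $F$; (c) the energy estimate $\rho(g)\ge\rho(f)-\varepsilon$, because $|dg|^2$ differs from $|df|^2$ by $O(\epsilon+\eta^2)$ pointwise, and both parameters can be taken arbitrarily small.

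The main obstacle will be (b), ensuring that the perturbation is never projectively negligible in regions where $|F|$ grows large; there the additive perturbation $\epsilon\mu\Psi$ must stay comparable to $F$, which is exactly the job of $\mu$. Choosing $\mu$ entire, and compatible with the growth of $F$, is the delicate technical point; a natural route is to build $\mu$ from the same theta functions underlying $\Psi$, tuned against a Weierstrass-type product constructed from the zero divisor of $F$. The strict inequality $\norm{df}_\infty<1$ is precisely what provides the room to carry out this perturbation without violating the Brody bound $|dg|\le 1$.
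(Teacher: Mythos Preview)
Your proposal has a genuine gap at exactly the point you flag as ``the delicate technical point'': the construction of the entire scalar $\mu$ making $|\mu\Psi|$ globally comparable to $|F|$. You do not construct $\mu$, and in fact no such $\mu$ can exist in general. If $|\mu\Psi|$ and $|F|$ were comparable on all of $\affine$, their ratio would be a bounded nonvanishing entire function, hence constant; but $\Psi$ (built from theta functions) has zeros while $|F|$ does not vanish, so this is impossible. Even weakening the requirement to ``$|\mu\Psi|\le C|F|$ everywhere and $|\mu\Psi|\ge c|F|$ on the degenerate regions of $f$'' still forces $\mu\Psi/F_j$-type quantities to be bounded entire functions with prescribed lower bounds on unknown, possibly very irregular sets, and your sketch (``tuned against a Weierstrass-type product constructed from the zero divisor of $F$'') does not address this: $F$ is a vector with no common zeros, so there is no single divisor to match. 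The upshot is a direct tension between your items (a)/(c), which need the perturbation projectively small everywhere, and (b), which needs it projectively visible precisely where $|F|$ may be growing fastest. Your outline does not resolve this tension.

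The paper avoids the global growth-matching problem entirely by working \emph{locally}. It first proves a one-step gluing lemma (Proposition \ref{prop: gluing one rational curve}): whenever $|df|$ is uniformly small on a large disk $D_R(p)$, one replaces $f=[1:f_1:\cdots:f_N]$ near $p$ by $g=[1:f_1+a/(z-p)^3:\cdots:f_N+a/(z-p)^3]$, so that $|dg|$ acquires a definite lower bound on $D_R(p)$ while $||dg|-|df||\le K/|z-p|^3$ outside. Then it tiles $\affine$ by $2R$-squares, walks through them one by one, and applies the gluing lemma on each square where the derivative is still too small. Because each surgery has cubic decay, the accumulated error in $|dg|$ and in the energy density over any fixed square is $O(K/R^3)$, summable over the lattice of centers; choosing $R$ large yields $\norm{dg}_{L^\infty}<1$, non-degeneracy on every tile, and $\rho(g)\ge\rho(f)-\varepsilon$. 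The essential difference from your plan is that the paper never tries to build a single global object tracking the unknown growth of a lift of $f$; it only acts where $|df|$ is already small, and there $f$ is close to a constant in the chart $[1:z_1:\cdots:z_N]$, so an explicit rational perturbation suffices.
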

\begin{proof}[Proof of Theorem \ref{main theorem}, 
assuming Theorems \ref{thm: mean dimension around non-degenerate curves} and 
\ref{thm: there are many non-degenerate curves}]
The upper bound $\dim(\moduli(\affine P^N):\affine)\leq 4N\rho_{\mathrm{NSA}}(\affine P^N)$
was already proved in \cite[Theorem 1.5]{Tsukamoto-Nagoya}.
Here we prove the lower bound.
Let $f:\affine \to \affine P^N$ be a Brody curve.
Let $0<c<1$ and set $f_c(z)=f(cz)$.
Then $|df_c|(z)= c|df|(cz)$ and $\rho(f_c) = c^2\rho(f)$.
Since $\norm{df_c}_{L^\infty(\affine)}\leq c < 1$, 
we can apply Theorem \ref{thm: there are many non-degenerate curves} to $f_c$.
Then for any $\varepsilon>0$ 
there exists a non-degenerate Brody curve $g:\affine \to \affine P^N$ satisfying 
$\norm{dg}_{L^\infty(\affine)}<1$ and $\rho(g)\geq \rho(f_c)-\varepsilon= c^2\rho(f)-\varepsilon$.
By Theorem \ref{thm: mean dimension around non-degenerate curves} 
\[ \dim(\moduli(\affine P^N):\affine)\geq 2(N+1)\rho(g) \geq 2(N+1)(c^2\rho(f)-\varepsilon).\]
Let $\varepsilon\to 0$ and $c\to 1$. We get 
$\dim(\moduli(\affine P^N):\affine)\geq 2(N+1)\rho(f)$.
Taking the supremum over $f\in \moduli(\affine P^N)$, we get 
$\dim(\moduli(\affine P^N):\affine)\geq 2(N+1)\rho(\affine P^N)$.
\end{proof}

\section{Some preliminaries}

\subsection{Review of mean dimension} \label{subsection: review of mean dimension}
In this subsection we review the definition of mean dimension.
For the detail, see Gromov \cite{Gromov} and Lindenstrauss-Weiss \cite{Lindenstrauss-Weiss}.
(For some related works, see also Lindenstrauss \cite{Lindenstrauss} and
 Gournay \cite{Gournay-thesis, Gournay, Gournay-holder, Gournay-neumann}.)

Let $(X,d)$ be a compact metric space, and let $Y$ be a topological space.
Let $\varepsilon>0$.
A continuous map $f:X\to Y$ is called an $\varepsilon$-embedding if
$\diam f^{-1}(y)\leq \varepsilon$ for all $y\in Y$.
Here $\diam f^{-1}(y)$ is the supremum of $d(x_1, x_2)$ over $x_1, x_2\in f^{-1}(y)$.
We define $\widim_\varepsilon(X,d)$ as the minimum integer $n\geq 0$ such that there are
an $n$-dimensional polyhedron $P$ and an $\varepsilon$-embedding $f:X\to P$.

For example, let $X = [0,1]\times [0,\varepsilon]$ with the Euclidean distance.
Then the projection $\pi:X\to [0,1]$ is an $\varepsilon$-embedding, and we have 
$\widim_\varepsilon (X, \mathrm{Euclid}) = 1$.
The following example is very important in the later argument.
This was given by Gromov \cite[p. 333]{Gromov}.
(For the detailed proof, see Gournay \cite[Lemma 2.5]{Gournay} or 
Tsukamoto \cite[Appendix]{Tsukamoto-ETDS}.)
\begin{example} \label{example: widim of Banach ball}
Let $V$ be a finite dimensional Banach space over $\mathbb{R}$, 
and set $B_r(V) := \{x\in V| \norm{x}\leq r\}$ for $r>0$.
For $0<\varepsilon <r$, 
\[ \widim_\varepsilon (B_r(V), \norm{\cdot}) =\dim V.\]
Here we consider the norm distance on $B_r(V)$.
\end{example}
For a subset $\Omega\subset \affine$ and $r>0$, we define $\partial_r\Omega$ as the set of 
$a\in \affine$ satisfying $D_r(a)\cap \Omega\neq \emptyset$ 
and $D_r(a)\cap (\affine\setminus \Omega)\neq \emptyset$.
Let $\Omega_n$ $(n\geq 1)$ be a sequence of bounded Borel subsets of $\affine$.
It is called a F{\o}lner sequence if for all $r>0$
\[ \frac{\area(\partial_r\Omega_n)}{\area(\Omega_n)} \to 0 \quad (n\to \infty).\]
For example, the sequence $\Omega_n := D_n(0)$ is a F{\o}lner sequence.
The sequence $\Omega_n:= [0,n]\times [0,n]$ is also F{\o}lner.
We need the following ``Ornstein-Weiss lemma''. 
(For the proof, see Gromov \cite[pp. 336-338]{Gromov}.)
\begin{lemma} \label{lemma: Ornstein-Weiss lemma}
Let $h: \{\text{bounded Borel subsets of }\,\affine\}\to \mathbb{R}_{\geq 0}$ be a map satisfying 
the following three conditions.

\noindent 
(i) If $\Omega_1\subset \Omega_2$, then $h(\Omega_1)\leq h(\Omega_2)$.

\noindent 
(ii) $h(\Omega_1\cup \Omega_2) \leq h(\Omega_1)+h(\Omega_2)$.

\noindent 
(iii) For any $a\in \affine$ and any bounded Borel subset $\Omega \subset \affine$, we have 
$h(a+\Omega)=h(\Omega)$ where $a+\Omega:=\{a+z\in \affine| z\in \Omega\}$.

Then for any F{\o}lner sequence $\Omega_n$ $(n\geq 1)$ in $\affine$, the limit of the sequence 
\[ \frac{h(\Omega_n)}{\area(\Omega_n)} \quad (n\geq 1) \]
exists, and its value is independent of the choice of a F{\o}lner sequence.
\end{lemma}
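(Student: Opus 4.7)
The plan is to show that for every F{\o}lner sequence $\Omega_n$ in $\affine$,
\[ \lim_{n \to \infty} \frac{h(\Omega_n)}{\area(\Omega_n)} = \alpha, \quad \text{where } \alpha := \inf_F \frac{h(F)}{\area(F)}, \]
with the infimum over bounded Borel subsets $F \subset \affine$ of positive area. Since $\alpha$ is intrinsic to $h$, this simultaneously yields existence of the limit and its independence of the F{\o}lner sequence.

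The inequality $\liminf_n h(\Omega_n)/\area(\Omega_n) \geq \alpha$ is immediate from the definition of $\alpha$. The content of the lemma is the matching upper bound, which I plan to prove in the stronger form
\[ \limsup_{n \to \infty} \frac{h(\Omega_n)}{\area(\Omega_n)} \leq \frac{h(F)}{\area(F)} \qquad (\ast) \]
for every fixed bounded Borel $F$ with $\area(F) > 0$; taking the infimum over $F$ then gives $\limsup \leq \alpha$, completing the proof.

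To establish $(\ast)$, fix such an $F$, set $R := \diam F$, and quasi-tile each $\Omega_n$ (for $n$ sufficiently large) by disjoint translates of $F$: produce points $a_1, \ldots, a_{k_n} \in \affine$ such that the $F + a_i$ are pairwise disjoint, their union covers $\Omega_n$ except for a subset of area $\delta_n \cdot \area(\Omega_n)$ with $\delta_n \to 0$, and $k_n \cdot \area(F) \leq \area(\Omega_n) + O(\area(\partial_R \Omega_n))$. Conditions (ii) and (iii) then yield
\[ h(\Omega_n) \leq \sum_i h(F + a_i) + h(\text{uncovered part}) = k_n \, h(F) + h(\text{uncovered part}), \]
and the second summand is bounded by covering the uncovered region with $O(\delta_n \area(\Omega_n)/\area(F))$ additional translates of $F$ and reapplying (ii), (iii). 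Dividing by $\area(\Omega_n)$ and using the F{\o}lner condition $\area(\partial_R \Omega_n)/\area(\Omega_n) \to 0$ together with $\delta_n \to 0$ then produces $(\ast)$.

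The principal obstacle is the quasi-tiling step, specifically the requirement $\delta_n \to 0$: a naive greedy packing of $F$-translates into $\Omega_n$ yields disjointness and a correctly controlled $k_n$, but typically leaves a uniform positive fraction of $\Omega_n$ uncovered, which is insufficient for $(\ast)$. Following Ornstein--Weiss, one overcomes this by iterating the packing at successive scales: at each stage one greedily packs disjoint $F$-translates into the leftover from the previous stage (which still satisfies a F{\o}lner-type condition), and a volume argument shows that a positive fraction of the leftover gets covered at each stage, so the uncovered fraction shrinks geometrically with the number of iterations. In the abelian setting of $\affine$ the procedure admits significant simplification (via continuous averaging of lattice placements, or via a preliminary reduction to square tiles using a Fekete argument on $h([0, mL]^2) \leq m^2 h([0, L]^2)$), but the underlying multi-scale iteration is the same.
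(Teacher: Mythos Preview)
The paper does not give its own proof of this lemma; it simply cites Gromov \cite[pp.~336--338]{Gromov}. So there is no in-paper argument to compare against, and I comment instead on the soundness of your sketch.

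Your central claim---that the limit equals $\alpha=\inf_F h(F)/\area(F)$ with the infimum over \emph{all} bounded Borel $F$ of positive area, equivalently that $(\ast)$ holds for every such $F$---is false. Take $h(\Omega)$ to be the minimum number of closed unit disks needed to cover $\Omega$; this $h$ is monotone, subadditive, and translation-invariant. With $F$ the unit disk itself, $h(F)/\area(F)=1/\pi\approx 0.318$, yet for $\Omega_n=[0,n]^2$ Kershner's theorem on the thinnest circle covering of the plane gives $h(\Omega_n)/n^2\to 2/(3\sqrt{3})\approx 0.385>1/\pi$. So $(\ast)$ fails for this $F$, and your iterated single-tile packing cannot rescue it: after one greedy round of disk-packing the leftover consists of curvilinear triangles each smaller than the disk, no further disk fits, and the recursion halts with a fixed positive uncovered fraction. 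The genuine Ornstein--Weiss quasi-tiling uses \emph{several} tiles $F_1,\dots,F_k$ taken from a F{\o}lner sequence at increasing scales---this multiplicity of scales is precisely what drives the geometric decay of the uncovered part---and it delivers only the comparison $\limsup_n h(\Omega_n)/\area(\Omega_n)\le \liminf_m h(\Omega'_m)/\area(\Omega'_m)$ between two F{\o}lner sequences, not your stronger $(\ast)$. The alternative you mention parenthetically at the end---first handle squares via the Fekete inequality $h([0,mL]^2)\le m^2\,h([0,L]^2)$, then sandwich an arbitrary F{\o}lner set between unions of squares using monotonicity, subadditivity, and the F{\o}lner condition---is correct and is the cleanest route in the abelian case $\affine$; that, rather than the single-$F$ argument, is the one to write out.
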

Suppose that the Lie group $\affine$ continuously acts on a compact metric space $X$.
Here we don't assume that the distance is invariant under the group action.
For a subset $\Omega \subset \affine$, we define a new distance $d_{\Omega}$ on $X$ by 
\[ d_{\Omega}(x,y) :=\sup_{a\in \Omega}d(a.x,a.y).\]
It is easy to see that the map $\Omega\mapsto \widim_\varepsilon(X,d_{\Omega})$
satisfies the three conditions in Lemma \ref{lemma: Ornstein-Weiss lemma} for each $\varepsilon>0$.
So we define a mean dimension $\dim(X:\affine)$ by 
\[ \dim(X:\affine) :=
    \lim_{\varepsilon\to +0}
\left(\lim_{n\to \infty} \frac{\widim_\varepsilon(X,d_{\Omega_n})}{\area(\Omega_n)}\right) \]
where $\Omega_n$ $(n\geq 1)$ is a F{\o}lner sequence in $\affine$.
The value of the mean dimension $\dim(X:\affine)$ is independent of the choice of a F{\o}lner sequence, 
and it is a topological invariant. 
(That is, it is independent of the choice of a distance on $X$ compatible with the 
topology.)
For example, we have 
\begin{equation} \label{eq: several definition of mean dimension}
 \begin{split}
 \dim(X:\affine) &= 
  \lim_{\varepsilon\to +0}
   \left(\lim_{R\to \infty} \frac{\widim_\varepsilon(X, d_{D_R(0)})}{\pi R^2}\right)\\
  &= \lim_{\varepsilon\to +0}
   \left(\lim_{R\to \infty}\frac{\widim_\varepsilon(X,d_{[0,R]\times [0,R]})}{R^2}\right).
 \end{split}
\end{equation}

\subsection{Energy density}   \label{subsection: energy density}

Here we explain some basic properties of the energy density $\rho(f)$ introduced in 
(\ref{eq: definition of energy density}).
Let $f:\affine\to \affine P^N$ be a Brody curve.
Then the map 
\[ \Omega \mapsto \sup_{a\in \affine} \int_{a+\Omega} |df|^2dxdy \]
clearly satisfies the three conditions in Lemma \ref{lemma: Ornstein-Weiss lemma},
where $\Omega\subset \affine$ is a bounded Borel subset.
Therefore we can define the energy density $\rho(f)$ by
\[ \rho(f) := \lim_{n\to \infty} \frac{1}{\area(\Omega_n)}
    \left(\sup_{a\in \affine}\int_{a+\Omega_n}|df|^2dxdy\right), \]
where $\Omega_n$ $(n\geq 1)$ is a F{\o}lner sequence in $\affine$.
In particular, we have 
\begin{equation}  \label{eq: several definition of energy density}
  \begin{split}
   \rho(f) &= \lim_{R\to \infty} \frac{1}{\pi R^2}\left( \sup_{a\in \affine}\int_{|z-a|<R} |df|^2dxdy\right)\\
   &= \lim_{R\to \infty}\frac{1}{R^2}
    \left(\sup_{a,b\in \mathbb{R}}\int_{[a,a+R]\times [b,b+R]} |df|^2dxdy\right).
  \end{split}
\end{equation}
From this we get 
\[ \rho(f) \geq \limsup_{R\to \infty} \frac{1}{\pi R^2}\int_{|z|<R}|df|^2 dxdy \geq 
    \limsup_{R\to \infty} \frac{2}{\pi R^2} T(R,f) =: \rho_{\mathrm{NSA}}(f).\]
If $f$ is elliptic (i.e. there is a lattice $\Lambda\subset \affine$ such that 
$f(z+\lambda)=f(z)$ for all $\lambda\in \Lambda$), then 
\begin{equation*}
   \rho(f) = \limsup_{R\to \infty} \frac{1}{\pi R^2}\int_{|z|<R}|df|^2 dxdy 
   = \rho_{\mathrm{NSA}}(f)
  = \frac{1}{\area(\affine/\Lambda)}\int_{\affine/\Lambda} |df|^2dxdy.
\end{equation*}

In the paper \cite{Tsukamoto-packing} we studied the quantity 
\[ \limsup_{R\to \infty} \frac{1}{\pi R^2}\int_{|z|<R}|df|^2 dxdy. \]
Some methods and results in \cite{Tsukamoto-packing} can be also applied to $\rho(f)$.
For example, from \cite[Proposition 2.6, Proposition 3.1]{Tsukamoto-packing}
(Proposition 3.1 in \cite{Tsukamoto-packing} follows from a result of Calabi 
\cite[Thoerem 8]{Calabi},),
there exists $0<c(N)<1$ such that for all Brody curves $f:\affine \to \affine P^N$ and 
all $a,b\in \mathbb{R}$
\[ \int_{[a,a+1]\times [b,b+1]} |df|^2 dxdy \leq c(N).\]
Hence 
\[ \rho(\affine P^N) = \sup_{f\in \moduli(\affine P^N)}\rho(f) \leq c(N) <1.\]
Moreover, from \cite[Proposition 5.10]{Tsukamoto-packing}, 
there exists $r>0$ such that for all Brody curves
$f:\affine \to \affine P^1$ and all $a,b\in \mathbb{R}$
\[ \frac{1}{r^2} \int_{[a,a+r]\times [b,b+r]} |df|^2 dxdy  \leq 1- 10^{-100}.\]
Hence we get an explicit (but very rough) bound:
\[ \rho(\affine P^1) \leq 1-10^{-100}.\]

In the paper \cite[Section 1.2]{Tsukamoto-ETDS}
we constructed an elliptic function $f:\affine \to \affine P^1$ such that 
$f$ is a Brody curve and 
\[ \rho(f) = \rho_{\mathrm{NSA}}(f) = \frac{2\pi}{\sqrt{3}}\left(\int_1^\infty \frac{dx}{\sqrt{x^3-1}}\right)^{-2}
   = 0.6150198678198\dots.\]
Hence 
\[ \frac{2\pi}{\sqrt{3}}\left(\int_1^\infty \frac{dx}{\sqrt{x^3-1}}\right)^{-2} \leq \rho(\affine P^1)
    \leq 1 -10^{-100}.\]
The authors think that it is very wonderful if the first inequality is an equality.

It is very difficult to determine the value of $\rho(\affine P^N)$, but
we have the following clear result on its asymptotic behavior:
The sequence $\rho(\affine P^N)$ $(N\geq 1)$ is a non-decreasing sequence, and 
from \cite[Theorem 1.5]{Tsukamoto-packing}, we have
\[ \lim_{N\to \infty} \rho(\affine P^N) =1.\]
Moreover the proof of \cite[Theorem 1.5]{Tsukamoto-packing} also shows
\[ \lim_{N\to \infty} \rho_{\mathrm{NSA}}(\affine P^N) =1.\]

\section{Proof of Theorem \ref{thm: mean dimension around non-degenerate curves}}
\label{section: proof of Theorem mean dimension around non-degenerate curves}

In this section we prove Theorem \ref{thm: mean dimension around non-degenerate curves}
assuming Propositions \ref{prop: result of deformation} and \ref{prop: structure of H_f} below.
Theorem \ref{thm:  there are many non-degenerate curves} will be proved in Section \ref{section: infinite gluing}.
Let $T\affine P^N$ be the tangent bundle of $\affine P^N$.
It naturally admits a structure of a holomorphic vector bundle.
We consider the Fubini-Study metric on it.
Let $f:\affine\to \affine P^N$ be a Brody curve,
and let $f^*T\affine P^N$ be the pull-back of $T\affine P^N$ by $f$.
$f^*T\affine P^N$ is a holomorphic vector bundle over the complex plane $\affine$,
and its Hermitian metric is given by the pull-back of the Fubini-Study metric.
Let $H_f$ be the space of holomorphic sections $u:\affine \to f^*T\affine P^N$
satisfying $\norm{u}_{L^\infty(\affine)}<+\infty$.
$(H_f,\norm{\cdot}_{L^\infty(\affine)})$ is a complex Banach space (possibly infinite dimensional).
We set $B_r(H_f) :=\{u\in H_f|\, \norm{u}_{L^\infty(\affine)}\leq r\}$ for $r\geq 0$.
\begin{proposition}  \label{prop: result of deformation}
Let $f:\affine \to \affine P^N$ be a non-degenerate Brody curve with $\norm{df}_{L^\infty(\affine)}<1$.
Then there exist $\delta>0$ and a map 
\[ B_\delta(H_f)\to \moduli(\affine P^N), \quad u\mapsto f_u, \]
satisfying the following two conditions:

\noindent 
(i) $f_0=f$.

\noindent 
(ii) For all $u, v\in B_\delta(H_f)$ and $z\in \affine$
\[ \left|d(f_u(z), f_v(z))-|u(z)-v(z)|\right|  \leq  \frac{1}{8}\norm{u-v}_{L^\infty(\affine)}.\]
Here $d(\cdot,\cdot)$ is the distance on $\affine P^N$ defined by the Fubini-Study metric, and 
$|u(z)-v(z)|$ is the fiberwise norm of $f^*T\affine P^N$.
\end{proposition}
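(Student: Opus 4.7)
The plan is to construct $f_u$ explicitly by lifting $u$ via the Euler sequence on $\affine P^N$. First choose a global holomorphic lift $F = (f_0, \ldots, f_N): \affine \to \affine^{N+1} \setminus \{0\}$ of $f$, which exists because $\affine$ is Stein. Pulling back the Euler sequence
\[ 0 \to \mathcal{O} \to \mathcal{O}(1)^{\oplus(N+1)} \to T\affine P^N \to 0 \]
by $f$, taking global holomorphic sections and using $H^1(\affine, \mathcal{O}) = 0$, every $u \in H_f$ admits a holomorphic lift $\tilde{u}: \affine \to \affine^{N+1}$, unique modulo addition of multiples $g F$ with $g$ entire. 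I would arrange $u \mapsto \tilde{u}$ to be linear in $u$ and with $\norm{\tilde{u}}_{L^\infty(\affine)}$ controlled by $\norm{u}_{L^\infty(\affine)}$. For $\norm{u}_{L^\infty(\affine)} < \delta$ with $\delta > 0$ sufficiently small, the sum $F + \tilde{u}$ stays nowhere zero and one defines
\[ f_u(z) := [F(z) + \tilde{u}(z)] \in \affine P^N, \]
a holomorphic map with $f_0 = f$.

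To check that $f_u$ is a Brody curve, rewrite $|df_u|^2 = \frac{1}{4\pi}\Delta \log|F + \tilde{u}|^2$ and expand around $\log|F|^2$; the zeroth-order term gives $|df|^2$, which is strictly below $1$ by hypothesis, and the correction terms are $O(\delta)$ in sup norm, so $|df_u| \leq 1$ after shrinking $\delta$. For the distance estimate, decompose any holomorphic lift as $\tilde{u} = g_u F + \tilde{u}^\perp$ with $\tilde{u}^\perp \perp F$ pointwise; the Fubini-Study norm then satisfies $|u(z)| = |\tilde{u}^\perp(z)|/|F(z)|$. The identity
\[ [F + \tilde{u}] = [(1 + g_u) F + \tilde{u}^\perp] = \bigl[F + \tilde{u}^\perp/(1 + g_u)\bigr] \]
reduces the distance computation to the Fubini-Study formula
\[ \cos d(f_u(z), f_v(z)) = \frac{|\langle F + \tilde{u}, F + \tilde{v}\rangle|}{|F + \tilde{u}| \cdot |F + \tilde{v}|} \]
applied with quasi-orthogonal representatives; a Taylor expansion in $\tilde{u}^\perp, \tilde{v}^\perp$ and $g_u, g_v$ yields $d(f_u(z), f_v(z)) = |u(z) - v(z)| + O\bigl(\delta \cdot \norm{u - v}_{L^\infty(\affine)}\bigr)$, and shrinking $\delta$ once more delivers the required constant $1/8$.

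The main obstacle is the bounded-lift step: producing a map $u \mapsto \tilde{u}$ which is linear in $u$ and sends $B_\delta(H_f)$ into a ball of controlled radius in the space of holomorphic maps $\affine \to \affine^{N+1}$. The ambiguity by $g F$ in the Euler sequence must be fixed canonically so that the resulting section of the exact sequence is bounded, and this is where the non-degeneracy of $f$ enters essentially, providing the uniform lower bound on $|df|$ over disks of fixed radius that powers the uniform complex-analytic estimates required across $\affine$. Once this bounded lifting is in place, the Brody bound and the distance estimate reduce to the routine Taylor-expansion arguments sketched above, using the strict inequality $\norm{df}_{L^\infty(\affine)} < 1$ to absorb the first-order variation of the spherical derivative.
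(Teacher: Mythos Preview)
Your approach founders on the bounded-lift step, and the gap is not repairable: for typical non-degenerate $f$ the map $\Gamma_{\mathrm{bdd}}\bigl(f^*\mathcal{O}(1)^{\oplus(N+1)}\bigr)\to H_f$ is \emph{not} surjective, so no bounded linear section exists. Take $N=1$ and $f$ a non-constant elliptic Brody curve, and consider $u:=df/dz\in H_f$ (holomorphic with $|u|=|df|\le 1$). Any holomorphic lift of $u$ has the form $\tilde u=F'+gF$ with $g$ entire, and boundedness of $|\tilde u|/|F|$ forces the $F$-parallel component
\[
 g+\frac{\langle F',F\rangle}{|F|^2}=g+\partial_z\log|F|^2
\]
to be bounded. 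Now $\Delta\log|F|^2=4\pi|df|^2$ is $\Lambda$-periodic with mean $4\pi\rho(f)>0$, so $\log|F|^2=\pi\rho(f)\,|z|^2+h+p$ with $h$ harmonic and $p$ periodic, whence $\partial_z\log|F|^2=\pi\rho(f)\,\bar z+(\text{holomorphic})+(\text{bounded})$. If $g+\partial_z\log|F|^2$ were bounded, an entire function plus $\pi\rho(f)\,\bar z$ would be bounded, hence (being harmonic) constant, forcing $\rho(f)=0$. Thus $df/dz$ admits no bounded holomorphic lift, and your definition of $f_u$ never gets started. (If you meant the literal Euclidean norm $\norm{\tilde u}_{L^\infty(\affine)}$ rather than $|\tilde u|/|F|$, the situation is worse still: already $|\tilde u^\perp|=|u|\cdot|F|$ blows up since $|F|$ grows like $e^{\pi\rho(f)|z|^2/2}$.)

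The structural reason is that your splitting reduces to an $L^\infty$ $\bar\partial$-problem on the \emph{trivial} line bundle $f^*\mathcal{O}$, whose curvature vanishes; non-degeneracy of $f$ gives you no positivity there, and the scalar equation $\bar\partial s=\eta$ on $\affine$ with bounded data is genuinely unsolvable in general. The paper sidesteps this by working intrinsically on $E=f^*T\affine P^N$: it sets $f_u:=\exp_f(u+\alpha(u))$ via the Riemannian exponential map and produces the holomorphicity correction $\alpha$ by the implicit function theorem, which needs a bounded right inverse of $\bar\partial:\Omega^0(E)\to\Omega^{0,1}(E)$. On $\Omega^{0,1}(E)$ the Weitzenb\"ock curvature term is $\gtrsim|df|^2$ by positivity of the holomorphic bisectional curvature of $\affine P^N$; the non-degeneracy hypothesis then permits a conformal perturbation $h\mapsto e^\varphi h$ of the fibre metric making this term uniformly positive, so that $\square_\varphi=\bar\partial\bar\partial^*_\varphi$ is invertible on $L^\infty$. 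That curvature positivity on $E$, entirely absent from your line-bundle problem, is the engine that drives the deformation.
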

Let $R>0$ and $\Lambda\subset \affine$.
$\Lambda$ is said to be an $R$-square if $\Lambda = [a,a+R]\times [b,b+R]$ for some $a,b\in \mathbb{R}$.
\begin{proposition}  \label{prop: structure of H_f}
Let $f:\affine \to \affine P^N$ be a non-degenerate Brody curve.
Then for any $R$-square $\Lambda\subset \affine$ with $R>2$ there 
exists a finite dimensional complex subspace $V\subset H_f$ satisfying the following two conditions:

\noindent 
(i) 
\[ \dim_\affine V\geq (N+1)\int_\Lambda |df|^2dxdy - C_f R.\]
Here $C_f$ is a positive constant depending only on $f$ (and independent of $R$, $\Lambda$).

\noindent 
(ii) For all $u\in V$ we have $\norm{u}_{L^\infty(\affine)}\leq 2\norm{u}_{L^\infty(\Lambda)}$.
\end{proposition}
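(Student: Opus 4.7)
The plan is to realise $V$ as the span of holomorphic ``peak sections'' of $f^*T\affine P^N$ concentrated near a finite set of points in $\Lambda$, produced by a H\"ormander-type weighted $L^2$ $\bar\partial$-estimate. The starting point is the Chern-curvature calculation: the induced Fubini-Study metric on $f^*T\affine P^N$ has first Chern form $(N+1)|df|^2\, dx\wedge dy$, because $c_1(T\affine P^N)=(N+1)\,\omega_{\mathrm{FS}}$ and $f^*\omega_{\mathrm{FS}} = |df|^2\, dx\wedge dy$ by the very definition of $|df|^2$ recalled in Section~\ref{section: introduction}. This is the structural source of the factor $(N+1)$ in the lower bound, and indicates that the relevant ``Bergman density'' of bounded holomorphic sections is $(N+1)|df|^2$ per unit Euclidean area.

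Next I would choose a finite set $S\subset\Lambda$, sufficiently separated and adapted to the local curvature scale, of cardinality at least $(N+1)\int_\Lambda |df|^2\, dxdy - C_f R$; the $O(R)$ defect absorbs both a discretisation error and the exclusion of points within bounded distance of $\partial\Lambda$. For each $p\in S$ I would construct $s_p\in H_f$ by taking a compactly supported model section peaking at $p$ in a local holomorphic trivialisation of $f^*T\affine P^N$ and correcting its $\bar\partial$-residual using H\"ormander's weighted $L^2$ estimate, with a plurisubharmonic weight tuned both to the intrinsic metric on $f^*T\affine P^N$ and to the desired decay away from $p$. Non-degeneracy of $f$ is exactly the input H\"ormander needs: Definition-Lemma~\ref{defintion-lemma: non-degenerate} provides $R_0, \delta>0$ such that every disk of radius $R_0$ contains a point with $|df|\geq\delta$, and after a standard averaging this yields a uniform positive lower bound on the weight, hence a uniform H\"ormander constant independent of $p$. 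The outcome is a section $s_p$ that peaks at $p$ with unit norm and decays exponentially in the intrinsic Bergman distance from $p$, with decay rate uniform in $p$.

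Setting $V:=\mathrm{span}_{\affine}\{s_p \mid p\in S\}$, the exponential decay ensures the $s_p$ are nearly orthogonal in the sup-norm, and a Gram-matrix argument gives $\dim_{\affine}V=|S|$, establishing (i). The main obstacle is condition (ii), which must hold for \emph{every} $u\in V$, not merely for the basis elements. For $u=\sum_p c_p s_p$, the sup-norm on $\Lambda$ is attained near some $p^\ast\in S$ with $\|u\|_{L^\infty(\Lambda)}\gtrsim |c_{p^\ast}|$, while for $z\notin\Lambda$ the triangle inequality $|u(z)|\leq \sum_p |c_p|\,|s_p(z)|$ combined with sharp exponential tails of the $s_p$ should yield $|u(z)|\leq 2\|u\|_{L^\infty(\Lambda)}$. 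Calibrating the lattice spacing of $S$, the cutoff scale used to manufacture the peaks, and the H\"ormander weight so that the tail constant tightens to a factor $2$ is the delicate step; the hypothesis $R>2$ enters here because two units of boundary buffer inside $\Lambda$ suffice to absorb the accumulated exponential tails from all of $S$.
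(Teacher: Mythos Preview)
Your approach via H\"ormander peak sections is genuinely different from the paper's, and the gap lies exactly where you flag it: establishing condition (ii) for \emph{arbitrary} linear combinations.

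The paper proceeds in two stages. First (Proposition~\ref{prop: preliminary of prop: structure of H_f}, valid for \emph{any} Brody curve, no non-degeneracy needed) it builds a space $W$ of smooth sections of $E$ that are \emph{compactly supported in $\Lambda$} and satisfy $\|\bar\partial u\|_{L^\infty}\leq \varepsilon\|u\|_{L^\infty}$, with the required dimension bound. The construction is: straighten $f$ near $\partial\Lambda$, descend the bundle to the torus $\mathbb{T}=\affine/(R\mathbb{Z}+R\sqrt{-1}\mathbb{Z})$, apply Riemann--Roch on $\mathbb{T}$ (this is where the $c_1$ identity and the factor $(N+1)\int_\Lambda |df|^2$ enter), impose $O(R)$ point-vanishing conditions on a net in $\pi(\partial_1\Lambda)$ to force the Riemann--Roch sections to be small there, and then cut off. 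Second, non-degeneracy enters only through Proposition~\ref{prop: existence of inverse of square}: one solves $\square_\varphi a=\bar\partial u$ with $\|\bar\partial^*_\varphi a\|_{L^\infty}\leq C'_f\varepsilon\|u\|_{L^\infty}$ and sets $u'=u-\bar\partial^*_\varphi a\in H_f$. Since the uncorrected $u$ is supported in $\Lambda$, one has $\|u\|_{L^\infty(\affine)}=\|u\|_{L^\infty(\Lambda)}$, and after the small correction this becomes $\|u'\|_{L^\infty(\affine)}\leq\frac{1+C'_f\varepsilon}{1-C'_f\varepsilon}\|u'\|_{L^\infty(\Lambda)}\leq 2\|u'\|_{L^\infty(\Lambda)}$ for $\varepsilon$ small. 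Condition (ii) is thus essentially automatic, for every element of $V$ at once, precisely because the approximate sections were compactly supported.

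In your scheme the $s_p$ are never compactly supported, so (ii) must come from tail estimates. Your assertion that $\|u\|_{L^\infty(\Lambda)}\gtrsim |c_{p^\ast}|$ already requires quantitative near-orthogonality, say $\sum_{p\neq q}|s_p(q)|<1$ uniformly in $q$; but the point density needed for (i) is $(N+1)|df|^2$ per unit area while the decay rate of a H\"ormander correction is governed by the same curvature scale, so it is not clear the lattice can be both dense enough for (i) and sparse enough for this. Moreover, the $L^\infty$-type $\bar\partial$-inverse available here (Proposition~\ref{prop: existence of inverse of square}) gives only a uniform $L^\infty$ bound on the correction, not the exponential spatial decay you invoke; obtaining that would require a separate Agmon-type argument you have not supplied. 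The paper's compact-support trick sidesteps this entire calibration problem, which is the real point of the Riemann--Roch-on-a-torus detour.
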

Propositions \ref{prop: result of deformation} and \ref{prop: structure of H_f} will be proved later
(Sections \ref{section: deformation theory} and \ref{section: study of H_f}.)
Here we prove Theorem \ref{thm: mean dimension around non-degenerate curves}, assuming 
them.
\begin{proof}[Proof of Theorem \ref{thm: mean dimension around non-degenerate curves}]
We define a distance on $\moduli(\affine P^N)$ by 
\[ \dist(g,h) := \sum_{n=0}^\infty \frac{1}{10^n}\sup_{|z|\leq n}d(g(z),h(z)),
    \quad (g,h\in \moduli(\affine P^N)).\]
Then $|\dist(g,h)-d(g(0),h(0))|\leq (1/9)\sup_{z\in \affine}d(g(z),h(z))$.
Hence for $\Omega\subset \affine$
\begin{equation} \label{eq: dist_Omega - d(,)}
 |\dist_\Omega(g,h)-\sup_{z\in \Omega}d(g(z),h(z))|\leq \frac{1}{9}\sup_{z\in \affine}d(g(z),h(z)).
\end{equation}
Let $\delta>0$ be the positive constant introduced in Proposition \ref{prop: result of deformation}.
Let $\Lambda\subset \affine$ be an $R$-square ($R>2$).
By Proposition \ref{prop: structure of H_f}, there exists $V = V_\Lambda\subset H_f$ satisfying 
the conditions (i) and (ii) in Proposition \ref{prop: structure of H_f}.
We investigate the map $B_\delta(H_f)\to \moduli(\affine P^N)$, $u\mapsto f_u$,
(given by Proposition \ref{prop: result of deformation}) and 
its restriction to $B_\delta(V) :=V\cap B_\delta(H_f)$.

From the condition (ii) of Proposition \ref{prop: result of deformation}, for $u,v\in B_\delta(H_f)$, 
we have $\sup_{z\in \affine}d(f_u(z),f_v(z)) \leq (9/8)\norm{u-v}_{L^\infty(\affine)}$.
Hence $(B_\delta(H_f), \norm{\cdot}_{L^\infty(\affine)})\to \moduli(\affine P^N)$ is continuous.
For $u,v\in B_\delta(H_f)$
\begin{equation*}
  \begin{split}
    &\left|\dist_\Lambda(f_u,f_v)-\sup_{z\in \Lambda}|u(z)-v(z)| \right|  \\
    &\leq 
    \left|\dist_\Lambda(f_u,f_v)-\sup_{z\in \Lambda} d(f_u(z),f_v(z))\right|
    + \left|\sup_{z\in \Lambda} d(f_u(z),f_v(z))-\sup_{z\in \Lambda}|u(z)-v(z)|\right| \\
    &\leq \frac{1}{9}\sup_{z\in \affine} d(f_u(z),f_v(z)) + \frac{1}{8}\norm{u-v}_{L^\infty(\affine)}
    \quad (\text{by Proposition \ref{prop: result of deformation} (ii) and (\ref{eq: dist_Omega - d(,)})})\\
    &\leq \frac{1}{4}\norm{u-v}_{L^\infty(\affine)}.
  \end{split}
\end{equation*}
Thus 
\[ \norm{u-v}_{L^\infty(\Lambda)}\leq \dist_\Lambda(f_u,f_v)+\frac{1}{4}\norm{u-v}_{L^\infty(\affine)}.\]
For $u,v\in B_\delta(V) = V\cap B_\delta(H_f)$, we have 
$\norm{u-v}_{L^\infty(\affine)} \leq 2\norm{u-v}_{L^\infty(\Lambda)}$
(Proposition \ref{prop: structure of H_f} (ii)).
Hence 
\[ \norm{u-v}_{L^\infty(\affine)} \leq 4\, \dist_\Lambda(f_u,f_v), \quad (u,v\in B_\delta(V)).\]
Hence for $\varepsilon<\delta/4$, 
\begin{equation*}
   \begin{split}
     \widim_\varepsilon (\moduli(\affine P^N), \dist_\Lambda) &\geq 
      \widim_{4\varepsilon}(B_\delta(V), \norm{\cdot}_{L^\infty(\affine)}) \\
     & = \dim_{\mathbb{R}} V \quad (\text{by Example \ref{example: widim of Banach ball}}) \\
     & \geq 2(N+1)\int_{\Lambda}|df|^2dxdy - 2C_f R \quad 
    (\text{by Proposition \ref{prop: structure of H_f} (i)}).
   \end{split}
\end{equation*}
Since $\widim_\varepsilon(\moduli(\affine P^N), \dist_\Lambda) 
= \widim_\varepsilon(\moduli(\affine P^N), \dist_{[0,R]\times [0,R]})$, for $\varepsilon<\delta/4$,
the quantity $\widim_\varepsilon(\moduli(\affine P^N), \dist_{[0,R]\times [0,R]})$ is bounded from below by 
\[ 2(N+1)\left(\sup_{\Lambda}\int_\Lambda |df|^2dxdy\right) -2C_f R.\]
Here $\Lambda$ runs over all $R$-squares.
Dividing this by $R^2$ and letting $R\to \infty$, we get 
\[ \lim_{R\to \infty}\left(\frac{1}{R^2}\widim_\varepsilon(\moduli(\affine P^N),\dist_{[0,R]\times [0,R]})\right)
    \geq 2(N+1)\rho(f).\]
Here we have used (\ref{eq: several definition of energy density}).
Let $\varepsilon \to 0$. Then $\dim(\moduli(\affine P^N):\affine)\geq 2(N+1)\rho(f)$
by (\ref{eq: several definition of mean dimension}).
\end{proof}
\begin{remark}
The above argument also gives the lower bound on the local mean dimension 
$\dim_f(\moduli(\affine P^N):\affine)$.
(Local mean dimension is a notion introduced in \cite{Matsuo-Tsukamoto}.)
The readers can skip this remark.

Let $f:\affine \to \affine P^N$ be a non-degenerate Brody curve with $\norm{df}_{L^\infty(\affine)}<1$.
Let $B_r(f)_\affine \subset \moduli(\affine P^N)$ $(r>0)$ be the set of $g\in \moduli(\affine P^N)$ satisfying 
$\dist_\affine(f,g)\leq r$.
Since $f_0=f$, if $(4/5)r\leq \delta$ then $u\in B_{(4/5)r}(H_f)$ satisfies 
$f_u\in B_r(f)_\affine$.
Let $\Lambda\subset \affine$ be an $R$-square $(R>2)$.
As in the above proof, for $4\varepsilon < (4/5)r\leq \delta$, we get 
\[ \widim_\varepsilon(B_r(f)_\affine,\dist_\Lambda)\geq 2(N+1)\int_\Lambda |df|^2dxdy -2C_f R.\]
Hence 
\begin{equation*}
  \begin{split}
   \dim_f(\moduli(\affine P^N):\affine) &:= \lim_{r\to +0}\left\{\lim_{\varepsilon\to +0}
  \left(\lim_{R\to \infty}\frac{1}{R^2}
  \sup_{\Lambda: \text{$R$-square}}\widim_\varepsilon(B_r(f)_\affine, \dist_\Lambda)\right)\right\} \\
   &\geq 2(N+1)\rho(f).
  \end{split}
\end{equation*}
Then $\dim_{loc}(\moduli(\affine P^N):\affine) := \sup_{f\in \moduli(\affine P^N)}\dim_f(\moduli(\affine P^N):\affine)$
satisfies 
\[ 2(N+1)\rho(\affine P^N)\leq \dim_{loc}(\moduli(\affine P^N):\affine) \leq 
   \dim(\moduli(\affine P^N):\affine)  \leq 4N\rho_{\mathrm{NSA}}(\affine P^N).\]
The proof is the same as the proof of Theorem \ref{main theorem}.
In particular we get 
\[ \dim_{loc}(\moduli(\affine P^1):\affine) = \dim(\moduli(\affine P^1):\affine).\]
\end{remark}

\section{Proof of Proposition \ref{prop: result of deformation}} \label{section: deformation theory}
In this section we prove Proposition \ref{prop: result of deformation}.

\subsection{Analytic preliminaries} \label{subsection: analytic preliminaries}
Let $f:\affine\to \affine P^N$ be a Brody curve.
As in Section \ref{section: proof of Theorem mean dimension around non-degenerate curves},
let $T\affine P^N$ be the tangent bundle of $\affine P^N$ with the natural 
holomorphic vector bundle structure, 
and let $E:=f^*T\affine P^N$ be the pull-back of $T\affine P^N$.
$E$ is a holomorphic vector bundle over the complex plane $\affine$.
Its Hermitian metric $h$ is given by the pull-back of the Fubini-Study metric.
$E$ is equipped with the unitary connection $\nabla$ defined by the holomorphic structure and 
the metric $h$.

Let $1<p<\infty$ be a real number, and $k\geq 0$ be an integer.
Let $a\in L^p_{k,loc}(\Lambda^{0,i}(E))$ $(i=0,1)$ be a locally $L^p_k$-section of 
$\Lambda^{0,i}(E)$ (the $\mathcal{C}^\infty$-vector bundle of $(0,i)$-forms valued in $E$).
For a subset $\Omega\subset \affine$, we set
\[ \norm{a}_{L^p_k(\Omega)} := \left(\sum_{n=0}^k \int_\Omega |\nabla^n a|^p dxdy \right)^{1/p}.\]
We define the $\ell^{\infty}L^p_k$-norm $\norm{a}_{\ell^\infty L^p_k}$ by 
\[ \norm{a}_{\ell^\infty L^p_k} := \sup_{z\in \affine} \norm{a}_{L^p_k(D_1(z))}. \]
Let $\ell^\infty L^p_k(\Lambda^{0,i}(E))$ be the Banach space of all $a\in L^p_{k,loc}(\Lambda^{0,i}(E))$
satisfying $\norm{a}_{\ell^\infty L^p_k} <+\infty$.
\begin{lemma}  \label{lemma: Sobolev embedding}
(i) 
For $a\in L^2_{2,loc}(\Lambda^{0,i}(E))$,
\[ \norm{a}_{L^\infty(\affine)} \leq \const  \norm{a}_{\ell^\infty L^2_2}.\]
(Precisely speaking, if the right-hand-side is finite then the left-hand-side is also finite 
and satisfies the inequality.)

\noindent 
(ii)
If $a\in L^p_{2,loc}(\Lambda^{0,i}(E))$ with $p>2$, then 
\[ \norm{a}_{L^\infty(\affine)} +\norm{\nabla a}_{L^\infty(\affine)} \leq 
   \const_p \norm{a}_{\ell^\infty L^p_2}.\]
\end{lemma}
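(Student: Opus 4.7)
My plan is to reduce both estimates to the standard Euclidean Sobolev embeddings on the unit disc, the reduction being made uniform in the base point by trivializing $E$ on unit discs with estimates independent of the centre of the disc.

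First I would fix an arbitrary point $z_0\in\affine$ and construct a holomorphic trivialization of $E$ over $D_1(z_0)$ in which the induced connection $1$-form $A$ satisfies $|A|+|\nabla A|+|\nabla^2 A|\le C$ with $C$ an absolute constant. This is possible because of bounded geometry: the Brody condition $|df|\le 1$ implies that $f(D_1(z_0))$ lies in a Fubini-Study ball of radius $\le 1$ in $\affine P^N$, so pulling back a holomorphic frame of $T\affine P^N$ over that ball (for instance the one coming from an affine coordinate chart) yields a holomorphic frame of $E|_{D_1(z_0)}$ in which the fibre metric $h$, its inverse, and all its derivatives are uniformly controlled. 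The Chern connection matrix $A = h^{-1}\partial h$ then has uniformly bounded $C^2$ norm.

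In this trivialization a section $a$ of $\Lambda^{0,i}(E)$ becomes a $\affine^N$-valued function on $D_1(z_0)$, and $\nabla a = da + A\cdot a$. By induction on $k$, the covariant norms $\sum_{j\le k}\norm{\nabla^j a}_{L^p(D_1(z_0))}$ are equivalent to the Euclidean Sobolev norms $\norm{a}_{W^{k,p}(D_1(z_0))}$, with constants independent of $z_0$. Applying the classical Sobolev embedding in real dimension $2$, namely $W^{2,2}(D_1)\hookrightarrow L^\infty(D_{1/2})$ and $W^{2,p}(D_1)\hookrightarrow C^1(D_{1/2})$ for $p>2$, yields
\[
  \sup_{D_{1/2}(z_0)}|a|\ \le\ \const\,\norm{a}_{L^2_2(D_1(z_0))}\ \le\ \const\,\norm{a}_{\ell^\infty L^2_2}
\]
for (i), and the analogous bound for $|a|+|\nabla a|$ in case (ii). Taking the supremum over $z_0\in\affine$ establishes both parts.

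The only nontrivial step is the construction of the uniform trivializations, and it is here that the Brody assumption enters decisively: without the bound $|df|\le 1$ the pulled-back metric could degenerate and the constants in the comparison between $\nabla$-norms and Euclidean Sobolev norms would fail to be uniform in $z_0$. Everything else is a textbook computation, so no serious obstacle is expected.
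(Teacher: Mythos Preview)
Your proposal is correct and follows essentially the same route as the paper: obtain trivializations of $E$ over unit discs in which the metric and connection are controlled uniformly in the centre, so that the covariant $L^p_k$-norms become uniformly equivalent to Euclidean Sobolev norms, and then invoke the standard $W^{2,2}\hookrightarrow L^\infty$ and $W^{2,p}\hookrightarrow C^1$ embeddings in real dimension two. The only cosmetic difference is that the paper justifies the uniform trivializations by appealing to the compactness of $\moduli(\affine P^N)$, whereas you argue more directly from the Lipschitz bound $|df|\le 1$; one small caution is that with the paper's normalization a Fubini--Study ball of radius $1$ may be too large to sit in a single affine chart, so you should work over discs $D_\delta(z_0)$ with $\delta$ small enough that $f(D_\delta(z_0))$ lies well inside an affine chart---this is harmless and the rest of your argument goes through unchanged.
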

\begin{proof}
Since $\moduli(\affine P^N)$ is compact, there are $\delta>0$ and $\const_k>0$ $(k\geq 0)$ such that 
for every $z\in \affine$ there is a trivialization $u$ of the holomorphic vector bundle $E$
over a neighborhood of $D_\delta(z)$ such that 
$u_* h =(h_{\alpha\bar{\beta}})_{\alpha\beta}$ (the Hermitian matrix representing $h$ under the trivialization $u$)
satisfies 
$\norm{h_{\alpha\bar{\beta}}}_{\mathcal{C}^k(D_\delta(z))}, \norm{h^{\alpha\bar{\beta}}}_{\mathcal{C}^k(D_\delta(z))}
\leq \const_k$.
(Here $(h^{\alpha\bar{\beta}})= (h_{\alpha\bar{\beta}})^{-1}$.)
Then the norms $\norm{a}_{L^p_k(D_\delta(z))}$ and $\norm{a}_{L^\infty(D_\delta(z))}$ 
are equivalent to $\norm{u\circ a}_{L^p_k(D_\delta(z))}$ and $\norm{u\circ a}_{L^\infty(D_\delta(z))}$
uniformly in $z\in \affine$ respectively.
(We consider $u\circ a$ as a $\affine^N$-valued $(0,i)$-form in $D_\delta(z)$.)
Hence the Sobolev embedding theorem (Gilbarg-Trudinger \cite[Chapter 7.7]{Gilbarg-Trudinger}) implies 
\[ \norm{a}_{L^\infty(D_\delta(z))}\leq \const \norm{a}_{L^2_2(D_\delta(z))}.\]
Here the important point is that $\const$ is independent of $z\in \affine$.
Thus $\norm{a}_{L^\infty(\affine)}\leq \const\norm{a}_{\ell^\infty L^2_2}$.
(ii) can be proved in the same way.
\end{proof}
Let $\varphi: \affine \to \mathbb{R}$ be a $\mathcal{C}^\infty$-function satisfying 
$\norm{\varphi}_{\mathcal{C}^k(\affine)} < +\infty$ for all $k\geq 0$.
We set $\bar{\partial}^{*}_\varphi(a) := e^{-\varphi}\bar{\partial}^*(e^{\varphi}a)$ for 
$a\in \Omega^{0,1}(E)$. Here $\bar{\partial}^*$ is the formal adjoint of the 
Dolbeault operator $\bar{\partial}:\Omega^0(E) \to \Omega^{0,1}(E)$
with respect to the Hermitian metric $h$.
$\bar{\partial}^*_\varphi$ is the formal adjoint of $\bar{\partial}$ with respect to the metric 
$e^{\varphi}h$.
We define the operator $\square_\varphi: \Omega^{0,i}(E)\to \Omega^{0,i}(E)$ by setting 
\[ \square_\varphi a := \bar{\partial}^*_\varphi \bar{\partial}a \quad (i=0) ,\quad 
    \square_\varphi a := \bar{\partial} \bar{\partial}^*_\varphi a\quad (i=1). \]
\begin{lemma}  \label{lemma: L^p-estimate}
For $a\in \ell^\infty L^p_{k+2}(\Lambda^{0,i}(E))$,
\[ \norm{a}_{\ell^\infty L^p_{k+2}}\leq \const_{p,k,\varphi} \left(\norm{a}_{\ell^\infty L^p} 
    +\norm{\square_\varphi a}_{\ell^\infty L^p_k}\right).\]
More precisely, if $a\in L^p_{k+2,loc}(\Lambda^{0,1}(E))$
and the right hand side of the above is finite then $a\in \ell^\infty L^p_{k+2}$
and satisfies the above inequality.
\end{lemma}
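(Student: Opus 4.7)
The plan is to reduce the claimed global $\ell^\infty L^p_{k+2}$ estimate to a standard interior $L^p$-regularity estimate for the second-order elliptic operator $\square_\varphi$ applied on each unit disk, with constants that are uniform in the centre $z\in\affine$.

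First, as in the proof of Lemma \ref{lemma: Sobolev embedding}, compactness of $\moduli(\affine P^N)$ supplies, for some fixed $\delta > 1$ and every $z\in\affine$, a holomorphic trivialization of $E$ over a neighbourhood of $D_\delta(z)$ in which the Hermitian matrix $(h_{\alpha\bar\beta})$ and its inverse have $\mathcal{C}^k$-norms bounded by constants independent of $z$. In this trivialization $\bar\partial$ acts componentwise, while $\bar\partial^*_\varphi = e^{-\varphi}\bar\partial^*(e^{\varphi}\,\cdot)$ becomes a first-order operator whose coefficients are smooth functions of $(h_{\alpha\bar\beta})$, $(h^{\alpha\bar\beta})$ and derivatives of $\varphi$; under the hypothesis $\norm{\varphi}_{\mathcal{C}^k(\affine)} < \infty$ these coefficients are uniformly bounded in every $\mathcal{C}^k$-norm, so $\square_\varphi$ is represented by a uniformly strongly elliptic operator with uniformly bounded coefficients on $D_\delta(z)$.

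Next, apply the standard interior $L^p$-estimate for strongly elliptic systems (Gilbarg--Trudinger \cite{Gilbarg-Trudinger}) to the pair $D_1(z)\subset D_\delta(z)$ to obtain
\[ \norm{a}_{L^p_{k+2}(D_1(z))} \leq \const_{p,k,\varphi}\bigl(\norm{a}_{L^p(D_\delta(z))} + \norm{\square_\varphi a}_{L^p_k(D_\delta(z))}\bigr),\]
where the constant is independent of $z$ thanks to the uniform coefficient bounds above. Since $D_\delta(z)$ can be covered by a bounded number of unit disks $D_1(w_j)$ (the bound depending only on $\delta$), each term on the right is dominated by $\norm{a}_{\ell^\infty L^p}$ and $\norm{\square_\varphi a}_{\ell^\infty L^p_k}$ respectively, with a larger constant still independent of $z$. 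Taking the supremum over $z\in\affine$ then yields the desired inequality; the same local estimate, read as an a priori bound, also justifies the ``more precise'' statement, for finiteness of the right-hand side forces each $\norm{a}_{L^p_{k+2}(D_1(z))}$ to be uniformly finite, hence $a\in \ell^\infty L^p_{k+2}$.

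The only real obstacle is the uniformity of all constants across $z\in\affine$, which is not automatic because $\affine$ is non-compact. This uniformity is precisely what the compactness of the target $\moduli(\affine P^N)$ delivers: it bounds the local geometry of $E = f^*T\affine P^N$ by constants independent of $z$, which is exactly the mechanism already exploited in Lemma \ref{lemma: Sobolev embedding} and transplanted here from Sobolev embedding to elliptic regularity.
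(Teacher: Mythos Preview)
Your argument is correct and follows essentially the same route as the paper: uniform local trivializations coming from compactness of $\moduli(\affine P^N)$, the observation that in these coordinates $\square_\varphi$ is a second-order elliptic operator with principal part $-\tfrac12\Delta$ and uniformly bounded lower-order coefficients, and then the interior $L^p$-estimate applied disk by disk with a uniform constant. The only cosmetic difference is that the paper works with $D_{\delta/2}(z)\subset D_\delta(z)$ rather than insisting on $\delta>1$, but this is immaterial.
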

\begin{proof}
We use the trivialization $u$ of $E$ introduced in the proof of Lemma \ref{lemma: Sobolev embedding}.
Since $\norm{\varphi}_{\mathcal{C}^l(\affine)} < +\infty$ for all $l\geq 0$,
under the trivialization $u$,
the operator $\square_\varphi$ is represented as
\[ \square_\varphi = (-1/2)\Delta + A\frac{\partial}{\partial x} + B\frac{\partial}{\partial y} +C \]
over a neighborhood of $D_\delta(z)$ where the $\mathcal{C}^l$-norms $(l\geq 0)$
of the matrices $A$, $B$, $C$ over $D_\delta(z)$ are bounded uniformly in $z\in \affine$.
Then from the $L^p$-estimate (Gilbarg-Trudinger \cite[Chapter 9.5]{Gilbarg-Trudinger})
\[ \norm{a}_{L^p_{k+2}(D_{\delta/2}(z))} \leq \const_{p,k,\varphi} 
   \left(\norm{a}_{L^p(D_\delta(z))} + \norm{\square_\varphi a}_{L^p_k(D_\delta(z))}\right).\]
The desired estimate follows from this.
\end{proof}

\subsection{Perturbation of the Hermitian metric}  \label{subsection: perturbation of the Hermitian metric}
Here we develop a perturbation technique of a Hermitian metric
(Lemma \ref{lemma: perturbation technique} below).
Gromov also discussed it in \cite[p. 399]{Gromov}.
Tsukamoto \cite[Section 4.3]{Tsukamoto-ETDS} studied an easier situation.
\begin{lemma}  \label{lemma: technical lemma for the metric perturbation}
Let $g:\affine\to \mathbb{R}_{\geq 0}$ be a non-negative smooth function with 
$\norm{g}_{\mathcal{C}^k(\affine)} < +\infty$ for all $k\geq 0$.
We suppose that 
the following non-degeneracy condition holds:
There exist $\delta>0$ and $R>0$ such that for all $p\in \affine$ we have $\norm{g}_{L^\infty(D_R(p))}
\geq \delta$.
Then there exists a smooth function $\varphi:\affine \to \mathbb{R}$ satisfying 
\[ (-\Delta+1)\varphi=-g, \quad \norm{\varphi}_{\mathcal{C}^k(\affine)} <+\infty \quad (\forall k\geq 0),\quad 
   \sup_{z\in \affine} \varphi(z) <0.\]
Here $\Delta = \partial^2/\partial x^2 + \partial^2/\partial y^2$.
\end{lemma}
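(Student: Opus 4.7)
The plan is to construct $\varphi$ by convolving $-g$ against the fundamental solution of $-\Delta+1$ on $\mathbb{R}^2$. I would take $G(z):=(2\pi)^{-1}K_0(|z|)$, the positive radial Bessel kernel satisfying $(-\Delta+1)G=\delta_0$: it is smooth and strictly positive on $\affine\setminus\{0\}$, has an integrable logarithmic singularity at $0$, decays like $e^{-|z|}/\sqrt{|z|}$ at infinity, and in particular belongs to $L^1(\affine)$. Setting
\[ \varphi(z):=-(G*g)(z)=-\int_{\affine}G(z-w)\,g(w)\,dxdy, \]
the integral converges absolutely, $(-\Delta+1)\varphi=-g$ holds distributionally, and $\varphi\le 0$ pointwise because $G,g\ge 0$. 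The uniform $\mathcal{C}^k$-bounds then come for free from the convolution structure: since the operator is translation invariant, one may differentiate inside the convolution to obtain $\partial^\alpha\varphi=-G*(\partial^\alpha g)$ and hence
\[ \norm{\partial^\alpha\varphi}_{L^\infty(\affine)}\le \norm{G}_{L^1(\affine)}\cdot\norm{\partial^\alpha g}_{L^\infty(\affine)}<+\infty \]
for every multi-index $\alpha$, which yields $\norm{\varphi}_{\mathcal{C}^k(\affine)}<+\infty$ for all $k\ge 0$.

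The only non-routine step, and the one that actually uses the non-degeneracy of $g$, is the strict upper bound $\sup_{\affine}\varphi<0$. I would fix an arbitrary $z\in\affine$, use the hypothesis to pick $p_z\in D_R(z)$ with $g(p_z)\ge\delta$, and then, writing $M:=\norm{\nabla g}_{L^\infty(\affine)}<\infty$ and $r_0:=\delta/(2M)$, invoke the mean value theorem to conclude $g\ge\delta/2$ throughout $D_{r_0}(p_z)\subset D_{R+r_0}(z)$. Since $K_0$ is positive and strictly decreasing on $(0,\infty)$, one has $G(\zeta)\ge(2\pi)^{-1}K_0(R+r_0)=:\kappa>0$ uniformly for $|\zeta|\le R+r_0$, so
\[ -\varphi(z)\ge \int_{D_{r_0}(p_z)} G(z-w)\,g(w)\,dxdy \ge \frac{\delta}{2}\cdot \pi r_0^2\cdot \kappa =: c>0, \]
with $c$ independent of $z$; hence $\sup_{\affine}\varphi\le -c<0$. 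The main obstacle is precisely this last step: positivity of both $G$ and $g$ gives only $\varphi\le 0$, and the strict uniform gap requires combining the non-degeneracy of $g$ (a definite lower bound somewhere in every disk of radius $R$) with the $\mathcal{C}^1$-bound on $g$ (to turn pointwise largeness into largeness on a disk of fixed size) so as to produce a region of uniform positive mass for the translation-invariant positive kernel $G$.
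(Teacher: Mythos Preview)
Your argument is correct (with the harmless caveat that the case $M=\norm{\nabla g}_{L^\infty}=0$, i.e.\ $g$ constant $\geq\delta$, should be handled separately since $r_0=\delta/(2M)$ is then undefined; but then $\varphi\equiv -g\int G<0$ is immediate).

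Your route, however, is genuinely different from the paper's. The paper does not use the explicit Bessel kernel at all. For existence it proves an a~priori $L^\infty$-bound $\norm{\varphi}_{L^\infty}\leq 4\norm{(-\Delta+1)\varphi}_{L^\infty}$ via a barrier function and the weak maximum principle, solves the cut-off equations $(-\Delta+1)\varphi_n=-\phi_n g$ by the Riesz representation theorem in $L^2_1$, and passes to a limit. For the strict negativity $\sup\varphi<0$ it runs a compactness argument: translate so that near-maximizers sit at the origin, extract subsequential limits $\varphi_\infty,g_\infty$, use the non-degeneracy to ensure $g_\infty\not\equiv 0$, and then apply the strong maximum principle to $(-\Delta+1)\varphi_\infty=-g_\infty\leq 0$. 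Your approach is more direct and quantitative: the convolution representation gives the $\mathcal{C}^k$-bounds in one line, and the strict negativity comes with an explicit lower bound $-\varphi\geq \tfrac{\delta}{2}\pi r_0^2\kappa$ rather than via a soft compactness/maximum-principle argument. The paper's method, on the other hand, does not require knowing the fundamental solution explicitly and is closer in spirit to the operator-theoretic estimates used elsewhere in the paper (the same barrier/compactness template reappears for the bundle-valued Laplacian $\square_\varphi$, where no explicit kernel is available).
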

\begin{proof}
We need the following sublemma.
\begin{sublemma}\label{sublemma: L^infty-estimate, baby}
Let $\varphi:\affine\to \mathbb{R}$ be a function of class $\mathcal{C}^2$ $(\varphi\in \mathcal{C}^2_{loc})$.
Suppose that the norms $\norm{\varphi}_{L^\infty(\affine)}$ and 
$\norm{(-\Delta+1)\varphi}_{L^\infty(\affine)}$ are both finite. 
Then 
\[ \norm{\varphi}_{L^\infty(\affine)} \leq 4\norm{(-\Delta+1)\varphi}_{L^\infty(\affine)}.\]
\end{sublemma}
\begin{proof}
Take $z_0\in \affine$ such that $|\varphi(z_0)|\geq \norm{\varphi}_{L^\infty(\affine)}/2$.
For simplicity, we suppose $z_0=0$.
Moreover we suppose $\varphi(0)\geq 0$. 
(If $\varphi(0)<0$ then we apply the following argument to $-\varphi$.)
We define $w:\affine\to \mathbb{R}$ by 
\[ w(z) := \frac{1}{2\pi}\int_0^{2\pi}e^{(x\cos \theta + y\sin\theta)/{\sqrt{2}}}d\theta.\]
$w$ satisfies 
\[ (-\Delta + 1/2)w=0, \quad \min_{z\in \affine}w(z) = w(0) = 1, \quad 
    w(z)\to +\infty \quad (|z|\to +\infty).\]
Then $(-\Delta+1)w = w/2 \geq 1/2$.
For $\varepsilon>0$, set $M:=2\norm{(-\Delta+1)\varphi}_{L^\infty(\affine)}+\varepsilon>0$.
\[ (-\Delta+1)\left(M w -\varphi\right) \geq M/2 - (-\Delta+1)\varphi
    \geq \norm{(-\Delta+1)\varphi}_{L^\infty(\affine)} +\varepsilon/2 -(-\Delta+1)\varphi
   \geq \varepsilon/2.\]
Since the function $Mw -\varphi$ is positive for $|z|\gg 1$,
the weak minimum principle 
(Gilbarg-Trudinger \cite[Chapter 3.1, Corollary 3.2]{Gilbarg-Trudinger}) implies 
that this function is non-negative everywhere. Hence 
\[ \norm{\varphi}_{L^\infty(\affine)}/2 \leq \varphi(0) \leq Mw(0) = M 
   = 2\norm{(-\Delta+1)\varphi}_{L^\infty(\affine)}+\varepsilon.\]
Let $\varepsilon\to 0$. We get 
\[ \norm{\varphi}_{L^\infty(\affine)}\leq 4\norm{(-\Delta+1)\varphi}_{L^\infty(\affine)}.\]
\end{proof}
Let $\phi_n:\affine\to [0,1]$ $(n\geq 1)$ be a cut-off function such that $\phi_n=1$ over 
$D_n(0)$ and $\supp(\phi_n)\subset D_{n+1}(0)$.
We want to solve the equation $(-\Delta+1)\varphi=-\phi_n g$.
The following is a standard $L^2$-argument.

Let $L^2_1(\affine)$ be the space of $L^2$-functions $\varphi:\affine \to \mathbb{R}$
satisfying $\partial \varphi/\partial x, \partial \varphi/\partial y\in L^2$
with the inner product 
$\langle \varphi, \varphi'\rangle_{L^2_1} := \langle \varphi, \varphi'\rangle_{L^2}
+\langle \partial \varphi/\partial x, \partial \varphi'/\partial x\rangle_{L^2} + 
\langle \partial \varphi/\partial y, \partial \varphi'/\partial y\rangle_{L^2}$.
Consider the bounded linear functional:
\[ L^2_1(\affine)\to \mathbb{R}, \quad \varphi \mapsto -\langle \varphi,\phi_n g\rangle_{L^2}.\]
From the Riesz representation theorem, there uniquely exists $\varphi_n\in L^2_1(\affine)$ satisfying 
$\langle \varphi, \varphi_n\rangle_{L^2_1} = -\langle \varphi, \phi_n g\rangle_{L^2}$
for all $\varphi\in L^2_1(\affine)$.
This implies $(-\Delta +1)\varphi_n = -\phi_n g$ as a distribution.
From the local elliptic regularity, $\varphi_n$ is smooth and $\norm{\varphi_n}_{L^\infty(\affine)} <+\infty$.
Then we can apply Sublemma \ref{sublemma: L^infty-estimate, baby} to $\varphi_n$ and get 
\[ \norm{\varphi_n}_{L^\infty(\affine)}\leq 4\norm{\phi_n g}_{L^\infty(\affine)} \leq 
   4\norm{g}_{L^\infty(\affine)} < +\infty.\]
By the local elliptic regularity, for every compact subset $K\subset \affine$ and $k\geq 0$, the sequence
$\norm{\varphi_n}_{\mathcal{C}^k(K)}$ $(n\geq 1)$ is bounded.
Then we can choose a subsequence $n_1<n_2<n_3<\dots$ such that 
$\varphi_{n_k}$ converges to some $\varphi$ in $\mathcal{C}^\infty$ over every compact subset of $\affine$.
$\varphi$ satisfies $(-\Delta+1)\varphi=-g$ and $\norm{\varphi}_{L^\infty(\affine)}\leq 4\norm{g}_{L^\infty(\affine)}$.
By the elliptic regularity, $\norm{\varphi}_{\mathcal{C}^k(\affine)} < +\infty$ for all $k\geq 0$.

Note that we have not used the non-degeneracy condition of the function $g$ so far.
We need it for the proof of the condition $\sup_{z\in \affine}\varphi(z)<0$.

Set $M:=\sup_{z\in \affine}\varphi(z)$.
There are $z_n\in \affine$ $(n\geq 1)$ such that $\varphi(z_n) \to M$.
Set $\varphi_n (z) := \varphi(z+z_n)$ and $g_n(z) := g(z+z_n)$.
Then 
\[ (-\Delta+1)\varphi_n = -g_n.\]
The sequences $\norm{\varphi_n}_{\mathcal{C}^k(\affine)}$ and $\norm{g_n}_{\mathcal{C}^k(\affine)}$
$(n\geq 1)$ are bounded for every $k\geq 0$.
Hence by choosing a subsequence (denoted also by $\varphi_n$ and $g_n$), we can assume that 
$\varphi_n$ and $g_n$ converge to $\varphi_\infty$ and $g_\infty$ respectively in $\mathcal{C}^\infty$
over every compact subset of $\affine$.
They satisfy
\[ g_\infty \geq 0, \quad 
   (-\Delta+1)\varphi_\infty = -g_\infty \leq 0, \quad 
   \varphi_\infty(z) \leq \varphi_\infty(0) = M.\]
From the non-degeneracy condition of $g$, the function $g_\infty$ is not zero.
Hence if $\varphi_\infty$ is a constant, then $\varphi_\infty = -g_\infty$ is a negative constant function
and $M <0$.
If $\varphi_\infty$ is not a constant, then
the strong maximum principle \cite[Chapter 3.2, Theorem 3.5]{Gilbarg-Trudinger} implies
that $\varphi_\infty$ cannot achieve a non-negative maximum value.
Hence $M=\varphi_\infty(0) = \max_{z\in \affine} \varphi_\infty(z) <0$.
\end{proof}
Recall that $f:\affine\to \affine P^N$ is a Brody curve and $E=f^*T\affine P^N$.
For $a\in \Omega^{0,1}(E)$ we have the Weintzenb\"{o}ck formula:
\begin{equation} \label{eq: Weintzenbock formula}
 \bar{\partial}\bar{\partial}^* a = \frac{1}{2}\nabla^*\nabla a + \Theta a,
\end{equation}
where $\Theta :=[\nabla_{\partial/\partial z},\nabla_{\partial/\partial \bar{z}}]$ is the 
curvature operator.
The crucial fact for the analysis of this paper is that
the holomorphic bisectional curvature of the Fubini-Study metric is positive.
From this, there exists a positive constant $c$ such that 
\[ h(\Theta a, a) \geq c |df|^2 |a|^2.\]
This means that the curvature operator is positive where $|df|$ is positive.
The non-degeneracy condition of the map $f$ enters into the argument through this point.
(See the condition (ii) of Definition-Lemma \ref{defintion-lemma: non-degenerate}.)
In the next lemma we will prove that if $f$ is non-degenerate then we can perturb the 
Hermitian metric $h$ so that the curvature is uniformly positive:
\begin{lemma} \label{lemma: perturbation technique}
Let $f:\affine\to \affine P^N$ be a non-degenerate Brody curve.
There is a smooth function $\varphi:\affine \to \mathbb{R}$ with
$\norm{\varphi}_{\mathcal{C}^k(\affine)}<+\infty$ $(\forall k\geq 0)$
satisfying the following.
Let $\Theta_\varphi$ be the curvature of the Hermitian metric $h_\varphi:=e^\varphi h$.
Then there is $c'>0$ such that 
\[ h_{\varphi}(\Theta_\varphi a, a) \geq c' |a|_{h_\varphi}^2 \]
for all $a\in \Omega^{0,1}(E)$.
\end{lemma}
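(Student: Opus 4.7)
The plan is to apply Lemma \ref{lemma: technical lemma for the metric perturbation} to $g := 4c|df|^2$, where $c > 0$ is the constant from the hypothesis $h(\Theta a, a) \geq c|df|^2 |a|^2$, and then take the resulting function as $\varphi$. First I would verify the hypotheses of that sublemma for $g$. The uniform $\mathcal{C}^k$-bounds on $|df|^2$ follow from the compactness of $\moduli(\affine P^N)$ together with Cauchy-type estimates for holomorphic maps, giving uniform $\mathcal{C}^k$-bounds on every $f \in \moduli(\affine P^N)$ and hence on its derivatives. The non-degeneracy condition for $g$, namely $\norm{g}_{L^\infty(D_R(a))} \geq 4c\delta^2 > 0$ for all $a \in \affine$, is precisely condition (ii) of Definition-Lemma \ref{defintion-lemma: non-degenerate} restated for $|df|^2$.

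Given $\varphi$ with $(-\Delta+1)\varphi = -4c|df|^2$, bounded $\mathcal{C}^k$-norms, and $\sup_\affine \varphi < 0$, the next step is to compute the curvature $\Theta_\varphi$ of $h_\varphi := e^\varphi h$ and compare it to $\Theta$. Since the holomorphic structure on $E$ is unchanged, the $(0,1)$-part of the Chern connection is unchanged, while its $(1,0)$-part shifts by $\partial\varphi \cdot I_E$. Using the definition $\Theta = [\nabla_{\partial/\partial z},\nabla_{\partial/\partial \bar z}]$ from (\ref{eq: Weintzenbock formula}), a direct commutator computation gives
\[
\Theta_\varphi = [\nabla_{\partial/\partial z}+\partial_z\varphi,\ \nabla_{\partial/\partial \bar z}] = \Theta - \tfrac{1}{4}\Delta\varphi \cdot I_E.
\]
Combined with $|a|^2_{h_\varphi} = e^\varphi |a|^2_h$ and the hypothesis on $\Theta$, this yields the pointwise estimate
\[
h_\varphi(\Theta_\varphi a, a) \geq \bigl(c|df|^2 - \tfrac{1}{4}\Delta\varphi\bigr)\,|a|^2_{h_\varphi}.
\]

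The choice of multiplicative factor $g = 4c|df|^2$ now pays off: rewriting the ODE as $\Delta\varphi = \varphi + 4c|df|^2$ makes the $|df|^2$ terms in the bracket cancel exactly, leaving
\[
h_\varphi(\Theta_\varphi a, a) \geq -\tfrac{1}{4}\varphi \cdot |a|^2_{h_\varphi} \geq \tfrac{1}{4}\bigl(-\sup\nolimits_\affine \varphi\bigr)\,|a|^2_{h_\varphi}.
\]
Setting $c' := \tfrac{1}{4}(-\sup_\affine \varphi) > 0$ gives the conclusion. The two technical points are the curvature-change formula (where I need to track that the $(0,1)$-part of the connection is holomorphically determined so only the $(1,0)$-part is perturbed), and the calibration of the multiplicative constant $4c$ that makes the $|df|^2$ contributions cancel so that the residue is the pointwise value $-\tfrac{1}{4}\varphi$; the strict inequality $\sup_\affine \varphi < 0$ provided by Lemma \ref{lemma: technical lemma for the metric perturbation} (and not merely non-positivity) is exactly what converts this residue into a \emph{uniform} positive lower bound.
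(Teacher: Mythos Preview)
Your proposal is correct and follows essentially the same route as the paper: apply Lemma \ref{lemma: technical lemma for the metric perturbation} to $g=4c|df|^2$, use the curvature-change identity $\Theta_\varphi = \Theta - \tfrac{1}{4}\Delta\varphi\cdot I_E$, and substitute $(-\Delta+1)\varphi=-4c|df|^2$ so that the $|df|^2$ terms cancel, leaving $c'=-\tfrac14\sup_\affine\varphi>0$. The only addition in your write-up is the explicit verification of the $\mathcal{C}^k$-bounds on $|df|^2$ via compactness of $\moduli(\affine P^N)$, which the paper leaves implicit.
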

\begin{proof}
We have 
$\Theta_\varphi a= \frac{-\Delta \varphi}{4}a+\Theta a$ for $a\in \Omega^{0,1}(E)$, and hence 
\[ h_\varphi(\Theta_\varphi a, a) = e^\varphi\left(\frac{-\Delta\varphi}{4}|a|_h^2 + h(\Theta a, a)\right)
   \geq e^{\varphi}\left(\frac{-\Delta\varphi}{4} + c|df|^2\right)|a|_h^2.\]
By the non-degeneracy of $f$ and Lemma \ref{lemma: technical lemma for the metric perturbation},
there is a smooth function $\varphi:\affine \to \mathbb{R}$ satisfying 
\[ (-\Delta+1)\varphi = -4c|df|^2,\quad \norm{\varphi}_{\mathcal{C}^k(\affine)}<+\infty\quad 
(\forall k\geq 0), \quad 
    \sup_{z\in \affine}\varphi(z) <0. \]
Then 
\[ h_\varphi(\Theta_\varphi a, a) \geq e^{\varphi}(-\varphi/4)|a|_h^2 = 
  (-\varphi/4)|a|_{h_\varphi}^2 \geq (-\sup_{z\in \affine}\varphi(z)/4)|a|_{h_\varphi}^2.\]
Hence $c':=-\sup_{z\in \affine}\varphi(z)/4>0$ satisfies the statement.
\end{proof}
In our convention, the Fubini-Study metric $g_{i\bar{j}}$ on $\affine P^N$ is given by 
\[ g_{i\bar{j}} = \frac{1}{2\pi}\frac{\partial^2}{\partial z_i \partial \bar{z}_j}\log(1+|z_1|^2+\dots+|z_N|^2) \]
over $\{[1:z_1:\dots:z_N]\}\subset \affine P^N$. 
The spherical derivative $|df|(z)$ for a holomorphic curve $f:\affine \to \affine P^N$ satisfies  
\begin{equation}  \label{eq: spherical derivative is dilatation}
 f^*\left(\sqrt{-1}\sum g_{i\bar{j}}dz_i d\bar{z}_j\right) = |df|^2 dxdy.
\end{equation}

The Fubini-Study metric $g_{i\bar{j}}$ satisfies the K\"{a}hler-Einstein equation
\[ \mathrm{Ric}_{i\bar{j}} = -\frac{\partial^2}{\partial z_i \partial\bar{z}_j} \log (\det(g_{k\bar{l}}))
   = 2\pi(N+1)g_{i\bar{j}}.\]
From this, the curvature operator
$\Theta = [\nabla_{\partial/\partial z}, \nabla_{\partial/\partial\bar{z}}]$ 
in (\ref{eq: Weintzenbock formula}) satisfies 
\begin{equation} \label{eq: Chern form}
 \frac{\sqrt{-1}}{2\pi} \mathrm{tr}(\Theta)dzd\bar{z} = (N+1)|df|^2 dxdy 
\end{equation}
since $\mathrm{tr}(\Theta)dz d\bar{z} = f^{*}(\sum\mathrm{Ric}_{i\bar{j}}dz_i d\bar{z}_j)$.
The equation (\ref{eq: Chern form}) will be used 
in the proof of Proposition \ref{prop: preliminary of prop: structure of H_f}.
Note that the form $(\sqrt{-1}/2\pi)\mathrm{tr}(\Theta)dzd\bar{z}$ is the Chern form
representing $c_1(E)$ although we have $c_1(E)=0$ because $H^2(\affine;\mathbb{Z})=0$.

\subsection{$L^\infty$-estimate} \label{subsection: L^infty-estimate}

Let $f:\affine\to \affine P^N$ be a non-degenerate Brody curve, and 
let $\varphi:\affine \to \mathbb{R}$ be a smooth function introduced in 
Lemma \ref{lemma: perturbation technique}.
Propositions \ref{prop: L^infty-estimate} and \ref{prop: existence of inverse of square}
below essentially use the positivity of the curvature $\Theta_\varphi$.

The following $L^\infty$-estimate was proved in \cite[Proposition 4.2]{Tsukamoto-ETDS}.
\begin{proposition} \label{prop: L^infty-estimate}
Let $a\in \Omega^{0,1}(E)$ be an $E$-valued $(0,1)$-form of class $\mathcal{C}^2$ ($a\in \mathcal{C}^2_{loc}$).
Set $b:=\square_\varphi a$.
If $\norm{a}_{L^\infty(\affine)}, \norm{b}_{L^\infty(\affine)} < +\infty$, then 
\[ \norm{a}_{L^\infty(\affine)}\leq \const_{f,\varphi} \norm{b}_{L^\infty(\affine)}.\]
\end{proposition}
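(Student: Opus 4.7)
The plan is to combine the Weitzenb\"ock formula (\ref{eq: Weintzenbock formula}), now applied with respect to the perturbed Hermitian metric $h_\varphi = e^\varphi h$ of Lemma \ref{lemma: perturbation technique}, with a weak maximum principle on the scalar function $u := |a|^2_{h_\varphi}$, in the spirit of Sublemma \ref{sublemma: L^infty-estimate, baby}. Writing the hypothesis $\square_\varphi a = b$ via the Weitzenb\"ock identity in the metric $h_\varphi$ gives $\tfrac12 \nabla^*\nabla a = b - \Theta_\varphi a$, where $\nabla$ is the unitary connection of $(E, h_\varphi)$. Pairing with $a$ and using the standard identity
\[
-\tfrac12 \Delta u \;=\; \operatorname{Re}\, h_\varphi(\nabla^*\nabla a,\, a) \;-\; |\nabla a|^2_{h_\varphi},
\]
together with the uniform positivity $h_\varphi(\Theta_\varphi a, a) \geq c' |a|^2_{h_\varphi}$ from Lemma \ref{lemma: perturbation technique} and the trivial bound $|\operatorname{Re}\, h_\varphi(b,a)| \leq |b|_{h_\varphi}\sqrt{u}$, produces the pointwise scalar differential inequality
\[
-\Delta u \;+\; 4 c'\, u \;\leq\; 4\, |b|_{h_\varphi}\sqrt{u} \qquad \text{on } \affine.
\]

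Next I extract the $L^\infty$-bound from this inequality. Set $M := \sup_{\affine} \sqrt{u}$ and $B := \norm{b}_{L^\infty(\affine),\,h_\varphi}$; both are finite by hypothesis and comparable to $\norm{a}_{L^\infty(\affine)}$, $\norm{b}_{L^\infty(\affine)}$ because $\varphi$ is uniformly bounded on $\affine$. If the supremum were attained at some $z_0$, then $-\Delta u(z_0) \geq 0$ would instantly yield $4c' M^2 \leq 4 B M$, hence $M \leq B/c'$. In general the supremum need not be attained, so I choose $z_n \in \affine$ with $u(z_n) \to M^2$ and translate: set $a_n(z) := a(z+z_n)$, $b_n(z) := b(z+z_n)$, $f_n(z) := f(z+z_n)$, $\varphi_n(z) := \varphi(z+z_n)$. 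The uniform $L^\infty$-bounds on $a_n, b_n$, the uniform $\mathcal{C}^k$-bounds on $\varphi_n$ (from Lemma \ref{lemma: perturbation technique}), and the precompactness of $\{f_n\}$ in $\moduli(\affine P^N)$ allow me to extract via elliptic regularity (Lemma \ref{lemma: L^p-estimate} combined with Lemma \ref{lemma: Sobolev embedding}) a $\mathcal{C}^2_{loc}$-convergent subsequence to a limit $(a_\infty, b_\infty, f_\infty, \varphi_\infty)$ satisfying the same scalar differential inequality with the \emph{same} $c'$, and with $u_\infty$ attaining its maximum $M^2$ at the origin. The attained-sup case then applies to $u_\infty$ and yields $M \leq B/c'$. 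Converting back from $h_\varphi$ to $h$ costs only the multiplicative constant $e^{\sup_\affine|\varphi|/2}$, producing the desired estimate $\norm{a}_{L^\infty} \leq \const_{f,\varphi} \norm{b}_{L^\infty}$.

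The main obstacle is precisely the non-attainment of the supremum on the noncompact plane $\affine$: this forces the translation-and-limit detour above, whose validity rests on \emph{uniformity in the basepoint} of every ingredient (bundle trivializations of $E$ as in the proof of Lemma \ref{lemma: Sobolev embedding}, the $\mathcal{C}^k$-bounds on $\varphi$, the curvature lower bound $c'$, and the interior elliptic estimates of Lemma \ref{lemma: L^p-estimate}). The non-degeneracy hypothesis on $f$ is essential here: it enters through Lemma \ref{lemma: technical lemma for the metric perturbation} to guarantee $\sup_\affine \varphi < 0$ and therefore $c' > 0$, and it ensures that any translation limit of $f$ is again non-degenerate so the curvature positivity survives passing to the limit.
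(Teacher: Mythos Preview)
Your derivation of the scalar differential inequality $-\Delta u + 4c'u \leq 4|b|_{h_\varphi}\sqrt{u}$ via Weitzenb\"ock and the curvature positivity of Lemma~\ref{lemma: perturbation technique} is correct and is exactly how the paper's argument begins. Where you diverge from the paper is in extracting the $L^\infty$-bound from this inequality. The paper does \emph{not} use a translation--compactness argument; it proceeds exactly as in Sublemma~\ref{sublemma: L^infty-estimate, baby}, comparing $u$ directly with the barrier function $w(z)=(2\pi)^{-1}\int_0^{2\pi}e^{\sqrt{2c'}(x\cos\theta+y\sin\theta)}\,d\theta$ (rescaled so that $(-\Delta+2c')w=0$), picking a near-maximizing point $z_0$, and applying the weak minimum principle to $Cw-u$ on the whole plane. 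No limits, no subsequences, no appeal to compactness of $\moduli(\affine P^N)$.

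Your translation--limit route is valid in spirit but carries a technical debt the barrier method avoids. With only $b\in L^\infty$, Lemmas~\ref{lemma: Sobolev embedding} and~\ref{lemma: L^p-estimate} give uniform $\ell^\infty L^p_2$ control of $a$ for every $p$, hence $\mathcal{C}^{1,\alpha}$ compactness---but not the $\mathcal{C}^2_{loc}$ convergence you claim, so the pointwise second-derivative test at the limit point is not immediately justified. This is patchable (pass the inequality to the limit in the distributional sense and invoke a weak maximum principle for $W^{2,p}$ subsolutions), but it is extra work. Also, your last sentence slightly overstates what is needed: once $\varphi$ is fixed, the lower bound $c'=-\sup_\affine\varphi/4$ is translation-invariant and passes to any limit automatically; you do not need to re-establish non-degeneracy of $f_\infty$. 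The barrier approach is shorter and sidesteps all of this, which is presumably why the paper refers back to Sublemma~\ref{sublemma: L^infty-estimate, baby} rather than writing out a compactness argument.
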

\begin{proof}
The proof is similar to the proof of Sublemma \ref{sublemma: L^infty-estimate, baby}.
For the detail, see \cite[pp. 1648-1649]{Tsukamoto-ETDS}.
\end{proof}
\begin{proposition} \label{prop: existence of inverse of square}
Let $b\in L^2_{2,loc}(\Lambda^{0,1}(E))$ and suppose $\norm{b}_{L^\infty(\affine)} < +\infty$.
Then there uniquely exists $a\in L^2_{4,loc} (\Lambda^{0,1}(E))$ satisfying 
\[ \square_\varphi a = b, \quad \norm{a}_{L^\infty(\affine)} < +\infty.\]
Moreover $\norm{a}_{L^\infty(\affine)}+\norm{\nabla a}_{L^\infty(\affine)} 
\leq \const_{f,\varphi}\norm{b}_{L^\infty(\affine)}$.
\end{proposition}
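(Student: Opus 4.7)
The plan is to combine the a priori $L^\infty$-bound of Proposition \ref{prop: L^infty-estimate} with global $L^2$-solvability based on the coercivity furnished by Lemma \ref{lemma: perturbation technique}. Uniqueness is immediate: any two solutions with finite $L^\infty$-norm differ by a bounded solution of $\square_\varphi a=0$, and Proposition \ref{prop: L^infty-estimate} forces it to be zero.

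For existence, I would first treat the case where $b$ is smooth and compactly supported. For any $a\in\mathcal{C}^\infty_c(\Lambda^{0,1}(E))$, integration by parts combined with the Weitzenb\"ock formula (\ref{eq: Weintzenbock formula}) and Lemma \ref{lemma: perturbation technique} yields
\[ \langle\square_\varphi a,a\rangle_{L^2(h_\varphi)} = \tfrac{1}{2}\|\nabla a\|_{L^2(h_\varphi)}^2 + \langle\Theta_\varphi a,a\rangle_{L^2(h_\varphi)} \geq c'\|a\|_{L^2(h_\varphi)}^2, \]
so $\square_\varphi$ extends by Lax--Milgram to the Hilbert-space completion of $\mathcal{C}^\infty_c$ in the norm $(\|\nabla a\|^2+\|a\|^2)^{1/2}$ as a self-adjoint operator with bounded inverse. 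This produces a unique weak solution $a\in L^2(\affine)$ of $\square_\varphi a=b$. Local elliptic regularity (Lemma \ref{lemma: L^p-estimate}) then upgrades $a$ to $\ell^\infty L^p_2$, and since $a\in L^2(\affine)$ trivially gives $\|a\|_{\ell^\infty L^2}<\infty$, Lemma \ref{lemma: Sobolev embedding} supplies $\|a\|_{L^\infty(\affine)}<\infty$. At this stage Proposition \ref{prop: L^infty-estimate} applies and yields $\|a\|_{L^\infty(\affine)}\leq \const_{f,\varphi}\|b\|_{L^\infty(\affine)}$.

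To handle the general case, pick smooth cutoffs $\chi_n:\affine\to[0,1]$ with $\chi_n\equiv 1$ on $D_n(0)$ and $\supp\chi_n\subset D_{n+1}(0)$, and let $a_n$ denote the solution from the previous step with right-hand side $\chi_n b$. By construction $\|a_n\|_{L^\infty(\affine)}\leq \const_{f,\varphi}\|b\|_{L^\infty(\affine)}$ uniformly in $n$, and Lemma \ref{lemma: L^p-estimate} then gives uniform bounds on $\|a_n\|_{\ell^\infty L^p_2}$ for every $p\in(1,\infty)$. Fixing some $p>2$, a Rellich-type diagonal argument extracts a subsequence converging in $\mathcal{C}^1_{loc}$ to some $a\in L^p_{2,loc}$ satisfying $\square_\varphi a=b$, and both the $L^\infty$-bound and the gradient bound $\|a\|_{L^\infty}+\|\nabla a\|_{L^\infty}\leq \const_{f,\varphi}\|b\|_{L^\infty}$ pass to the limit via Lemma \ref{lemma: Sobolev embedding}(ii).

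The main obstacle is the first existence step: carefully setting up the Hilbert-space completion on which $\square_\varphi$ is self-adjoint and coercive, ensuring that the integration by parts used in the coercivity identity has no boundary contributions on the chosen dense class of test sections, and checking that the resulting weak solution is regular enough for Proposition \ref{prop: L^infty-estimate} to apply. The decisive input is the uniform positivity of $\Theta_\varphi$ from Lemma \ref{lemma: perturbation technique}, which is precisely the point at which the non-degeneracy hypothesis on $f$ is essential.
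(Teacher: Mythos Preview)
Your proposal is correct and follows essentially the same route as the paper: uniqueness via Proposition \ref{prop: L^infty-estimate}, existence by solving $\square_\varphi a_n=\chi_n b$ through the $L^2$-coercivity coming from the Weitzenb\"ock formula and Lemma \ref{lemma: perturbation technique}, then upgrading via local elliptic regularity and Lemmas \ref{lemma: Sobolev embedding}--\ref{lemma: L^p-estimate}, applying the $L^\infty$-estimate uniformly in $n$, and passing to a subsequential limit. One small exposition point: your ``first step'' assumes $b$ smooth and compactly supported, but $\chi_n b$ is only $L^2_{2,loc}$ with compact support; fortunately your Lax--Milgram argument only needs the right-hand side in $L^2$, so the smoothness hypothesis is superfluous and can simply be dropped.
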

\begin{proof}
The uniqueness follows from Proposition \ref{prop: L^infty-estimate}.
(Note the Sobolev embedding $L^2_{4,loc}\hookrightarrow \mathcal{C}^2_{loc}$ in $\mathbb{R}^2$.)
So the problem is the existence.
We have the Weinzenb\"{o}ck formula: for $a\in \Omega^{0,1}(E)$
\[ \square_\varphi a = \frac{1}{2}\nabla_\varphi^*\nabla_\varphi a + \Theta_\varphi a,\]
where $\nabla_\varphi$ is the unitary connection on $E$ with respect to the metric $h_\varphi = e^\varphi h$.
$\Theta_\varphi$ satisfies the positivity condition in Lemma \ref{lemma: perturbation technique}.

Let $\phi_n :\affine \to [0,1]$ be a cut-off function such that $\phi_n = 1$ over $D_n(0)$ and
$\supp(\phi_n)\subset D_{n+1}(0)$.
From the positivity of the curvature, 
as in the proof of Lemma \ref{lemma: technical lemma for the metric perturbation},
a standard $L^2$-argument shows that 
there is $a_n\in L^2_1(\Lambda^{0,1}(E))$ (the space of $L^2$-sections $a$ of $\Lambda^{0,1}(E)$ satisfying 
$\nabla_\varphi a\in L^2$) satisfying $\square_\varphi a_n =\phi_n b$ as a distribution.
(For the detail, see \cite[Lemma 5.3]{Tsukamoto-ETDS}.)
The local elliptic regularity implies $a_n\in L^2_{4,loc}$.
By Lemmas \ref{lemma: Sobolev embedding} (i) and \ref{lemma: L^p-estimate},
\begin{equation*}
 \begin{split}
  \norm{a_n}_{L^\infty(\affine)}&\leq \const\norm{a_n}_{\ell^\infty L^2_2}\leq \const_\varphi
   \left(\norm{a_n}_{\ell^\infty L^2} + \norm{\square_\varphi a_n}_{\ell^\infty L^2}\right) \\
   &\leq \const_\varphi \left(\norm{a_n}_{L^2} + \norm{\phi_n b}_{L^\infty(\affine)}\right) <+\infty.
 \end{split}
\end{equation*}
By Proposition \ref{prop: L^infty-estimate} we have 
$\norm{a_n}_{L^\infty(\affine)}\leq \const_{f,\varphi} \norm{b_n}_{L^\infty(\affine)}
\leq \const_{f,\varphi} \norm{b}_{L^\infty(\affine)}$.
Then for any compact set $K\subset \affine$
the sequence $\norm{a_n}_{L^2_2(K)}$ $(n\geq 1)$ is bounded.
By choosing a subsequence $n_1<n_2<n_3<\dots$, the sequence $a_{n_k}$ converges to some $a$ weakly 
in $L^2_2(D_R(0))$ (and hence strongly in $L^\infty(D_R(0))$) for every $R>0$.
$a$ satisfies $\square_\varphi a=b$, and 
$\norm{a}_{L^\infty(\affine)} \leq \sup_{n\geq 1}\norm{a_n}_{L^\infty(\affine)} 
\leq \const_{f,\varphi}\norm{b}_{L^\infty(\affine)}$.
By the local elliptic regularity $a\in L^2_{4,loc}$.
By Lemmas \ref{lemma: Sobolev embedding} (ii) and \ref{lemma: L^p-estimate}
\[ \norm{a}_{L^\infty(\affine)} + \norm{\nabla a}_{L^\infty(\affine)} \leq \const \norm{a}_{\ell^\infty L^3_2}
   \leq \const_\varphi\left(\norm{a}_{\ell^\infty L^3}+\norm{b}_{\ell^\infty L^3}\right)
   \leq \const_{f,\varphi} \norm{b}_{L^\infty(\affine)}.\]
\end{proof}

\subsection{Deformation theory} \label{subsection: deformation}

Let $f:\affine \to \affine P^N$ be a non-degenerate Brody curve with $\norm{df}_{L^\infty(\affine)} <1$.
In this subsection we study a deformation of $f$ and 
prove Proposition \ref{prop: result of deformation}.
Gromov \cite[pp. 399-400, Projective interpolation theorem]{Gromov} 
studied a different kind of deformation theory.
Our argument is a generalization of the deformation theory of elliptic Brody curves
developed in \cite{Tsukamoto-ETDS}.

Consider the following map (see McDuff-Salamon \cite[p. 40]{Mcduff-Salamon}):
\[ \Phi: \ell^\infty L^2_3(E)\to \ell^\infty L^2_2(\Lambda^{0,1}(E)), \quad 
    u\mapsto P_u(\bar{\partial} \exp u) \otimes d\bar{z}.\]
Here $\exp u = \exp_{f(z)} u(z)$ is defined by the exponential map of the Fubini-Study metric, and 
\[ \bar{\partial} \exp u := \frac{1}{2}
   \left(\frac{\partial}{\partial x}\exp u + J \frac{\partial}{\partial y}\exp u\right)\quad 
 (\text{$J$: complex structure of $\affine P^N$}). \]
$P_{u(z)} :T_{\exp_{f(z)} u(z)} \affine P^N\to T_{f(z)} \affine P^N$ is the parallel translation along 
the geodesic $\exp_{f(z)} (t u(z))$ $(0\leq t\leq 1)$.

$\Phi$ is a smooth map between the Banach spaces. 
$\Phi(0)=0$ and the derivative of $\Phi$ at the origin is 
equal to the Dolbeault operator:
\[ d\Phi_0 = \bar{\partial} :\ell^\infty L^2_3(E) \to \ell^\infty L^2_2(\Lambda^{0,1}(E)).\]
\begin{proposition}  \label{prop: small deformation is unobstructed}
There is a bounded linear operator $Q: \ell^\infty L^2_2(\Lambda^{0,1}(E))\to \ell^\infty L^2_3(E)$
satisfying $\bar{\partial} \circ Q=1$.
\end{proposition}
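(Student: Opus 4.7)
My plan is to define $Q$ explicitly in terms of the box-operator $\square_\varphi$ for the perturbed metric supplied by Lemma \ref{lemma: perturbation technique}. Given $b \in \ell^\infty L^2_2(\Lambda^{0,1}(E))$, the Sobolev embedding in Lemma \ref{lemma: Sobolev embedding} (i) gives $\norm{b}_{L^\infty(\affine)} \leq \const\, \norm{b}_{\ell^\infty L^2_2} < +\infty$, so Proposition \ref{prop: existence of inverse of square} produces a unique $a \in L^2_{4,loc}(\Lambda^{0,1}(E))$ with $\square_\varphi a = b$ and $\norm{a}_{L^\infty(\affine)} + \norm{\nabla a}_{L^\infty(\affine)} \leq \const_{f,\varphi}\norm{b}_{L^\infty(\affine)}$. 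I then set
\[ Q(b) := \bar{\partial}^*_\varphi a. \]
Since $\square_\varphi a = \bar{\partial}\bar{\partial}^*_\varphi a$ on $(0,1)$-forms by definition, this immediately gives $\bar{\partial}\circ Q = 1$, and $\affine$-linearity of $Q$ follows from the uniqueness clause of Proposition \ref{prop: existence of inverse of square}.

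The remaining task is to upgrade the $L^\infty$-bound on $a$ to a bound on $\norm{Q(b)}_{\ell^\infty L^2_3}$ in terms of $\norm{b}_{\ell^\infty L^2_2}$. First, Lemma \ref{lemma: L^p-estimate} applied with $p=2$ and $k=2$ yields
\[ \norm{a}_{\ell^\infty L^2_4} \leq \const_\varphi \bigl(\norm{a}_{\ell^\infty L^2} + \norm{\square_\varphi a}_{\ell^\infty L^2_2}\bigr) = \const_\varphi\bigl(\norm{a}_{\ell^\infty L^2} + \norm{b}_{\ell^\infty L^2_2}\bigr). \]
The term $\norm{a}_{\ell^\infty L^2}$ is trivially controlled by $\norm{a}_{L^\infty(\affine)}$ (since we integrate over a unit disc), and the latter is bounded by $\const_{f,\varphi}\norm{b}_{L^\infty(\affine)} \leq \const_{f,\varphi}\norm{b}_{\ell^\infty L^2_2}$ from the previous step. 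Hence $\norm{a}_{\ell^\infty L^2_4} \leq \const_{f,\varphi}\norm{b}_{\ell^\infty L^2_2}$, and since $\bar{\partial}^*_\varphi$ is a first-order operator whose coefficients are uniformly bounded in every $\mathcal{C}^k$-norm (using the trivializations from the proof of Lemma \ref{lemma: Sobolev embedding} together with $\norm{\varphi}_{\mathcal{C}^k(\affine)} < +\infty$),
\[ \norm{Q(b)}_{\ell^\infty L^2_3} = \norm{\bar{\partial}^*_\varphi a}_{\ell^\infty L^2_3} \leq \const_\varphi\norm{a}_{\ell^\infty L^2_4} \leq \const_{f,\varphi}\norm{b}_{\ell^\infty L^2_2}, \]
which is the desired boundedness.

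Conceptually, the existence of a bounded right-inverse is deep: it hinges on the uniform positivity of the curvature of $h_\varphi$, which is exactly where the non-degeneracy of $f$ enters the theory. However, all of that difficulty has already been discharged in Lemma \ref{lemma: perturbation technique} and Proposition \ref{prop: existence of inverse of square}, so the present proposition reduces to an exercise in bookkeeping with the $\ell^\infty L^p_k$-calculus of Lemmas \ref{lemma: Sobolev embedding} and \ref{lemma: L^p-estimate}. The only place that requires some care is ensuring that the uniformity in the Sobolev and elliptic estimates (coming from the compactness of $\moduli(\affine P^N)$ and the bounded geometry of $h_\varphi$) matches the definition of the $\ell^\infty L^p_k$-norms; once that is verified, no global compactness argument on the non-compact base $\affine$ is required.
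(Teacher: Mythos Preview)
Your proof is correct and follows essentially the same route as the paper: both define $Q = \bar{\partial}^*_\varphi\,\square_\varphi^{-1}$ using Proposition \ref{prop: existence of inverse of square} to invert $\square_\varphi$, and then invoke Lemma \ref{lemma: L^p-estimate} to place the resulting $a$ in $\ell^\infty L^2_4$. The only cosmetic difference is that the paper phrases the argument as showing $\square_\varphi:\ell^\infty L^2_4(\Lambda^{0,1}(E))\to \ell^\infty L^2_2(\Lambda^{0,1}(E))$ is an isomorphism of Banach spaces (so boundedness of $\square_\varphi^{-1}$ comes for free from the open mapping theorem), whereas you spell out the quantitative bound $\norm{a}_{\ell^\infty L^2_4}\leq \const_{f,\varphi}\norm{b}_{\ell^\infty L^2_2}$ directly; the underlying ingredients are identical.
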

\begin{proof}
We will prove that the map 
\begin{equation} \label{eq: square is isomorphism}
 \square_\varphi = \bar{\partial}\bar{\partial}^*_\varphi:
 \ell^\infty L^2_4(\Lambda^{0,1}(E))\to \ell^\infty L^2_2(\Lambda^{0,1}(E))
\end{equation}
is an isomorphism.
($\varphi:\affine \to \mathbb{R}$ is a smooth function introduced in 
Lemma \ref{lemma: perturbation technique}.)
Then $Q:=\bar{\partial}^*_\varphi \square_\varphi^{-1}:\ell^\infty L^2_2(\Lambda^{0,1}(E))\to 
\ell^\infty L^2_3(\Lambda^{0,1}(E))$ becomes a right inverse of $\bar{\partial}$.
The injectivity of the map (\ref{eq: square is isomorphism}) directly follows from the 
$L^\infty$-estimate in Proposition \ref{prop: L^infty-estimate}.

On the other hand, 
by Proposition \ref{prop: existence of inverse of square},
for every $b\in \ell^\infty L^2_2(\Lambda^{0,1}(E))$ there 
is $a\in L^\infty \cap L^2_{4,loc}(\Lambda^{0,1}(E))$ satisfying $\square_\varphi a=b$.
By Lemma \ref{lemma: L^p-estimate}, $a\in \ell^\infty L^2_4$.
Thus the map (\ref{eq: square is isomorphism}) is surjective.
\end{proof}
Let $H_f$ be the Banach space of all $L^\infty$-holomorphic sections of $E$ introduced in 
Section \ref{section: proof of Theorem mean dimension around non-degenerate curves}.
$H_f$ is equal to the kernel of the map 
$\bar{\partial} :\ell^\infty L^2_3(E)\to \ell^\infty L^2_2(\Lambda^{0,1}(E))$
by Lemmas \ref{lemma: Sobolev embedding} and \ref{lemma: L^p-estimate}.
Moreover the norms $\norm{\cdot}_{\ell^\infty L^2_k}$ $(k\geq 0)$ are all equivalent to 
the norm $\norm{\cdot}_{L^\infty(\affine)}$ over $H_f$.

From Proposition \ref{prop: small deformation is unobstructed} and the implicit function theorem, 
there are $r>0$ and a smooth map 
$\alpha: \{u\in H_f| \norm{u}_{L^\infty(\affine)} <r\} \to \mathrm{Im} Q$ 
($\mathrm{Im} Q \subset \ell^\infty L^2_3(E)$ is a closed subspace) such that 
\[ \Phi(u+\alpha(u))=0, \quad \alpha(0)=0, \quad d\alpha_0=0.\]
The first and second conditions imply 
that $f_u :=\exp_f(u+\alpha(u))$ becomes a holomorphic curve with $f_0=f$.
The third condition implies that for any $\varepsilon>0$ there exists $0<\delta<r$ such that 
if $u,v\in H_f$ satisfies $\norm{u}_{L^\infty(\affine)}, \norm{v}_{L^\infty(\affine)} \leq \delta$ then 
$\norm{\alpha(u) -\alpha(v)}_{L^\infty(\affine)} \leq \varepsilon \norm{u-v}_{L^\infty(\affine)}$.
\begin{proof}[Proof of Proposition \ref{prop: result of deformation}]
Since $\norm{df}_{L^\infty(\affine)} < 1$, if $\delta\ll 1$, the holomorphic curves $f_u$ $(u\in B_\delta(H_f))$
satisfy $\norm{df_u}_{L^\infty(\affine)}\leq 1$.
We will prove that if $0<\delta <r$ is sufficiently small then 
the map
\begin{equation*} 
   B_\delta(H_f)\ni u\mapsto f_u \in \moduli(\affine P^N) 
\end{equation*}
satisfies the conditions in Proposition \ref{prop: result of deformation}.
The condition (i) ($f_0=f$) is OK. 
So we want to prove the condition (ii).

We choose $0<\delta<r$ sufficiently small so that 
all $u,v\in B_\delta(H_f)$ satisfy 
\[ \norm{\alpha(u)-\alpha(v)}_{L^\infty(\affine)} \leq (1/20)\norm{u-v}_{L^\infty(\affine)},\]
and that if $v_1,v_2\in T_p\affine P^N$ are two tangent vectors satisfying $|v_1|,|v_2|\leq 2\delta$
then 
\[ |d(\exp(v_1),\exp(v_2))-|v_1-v_2||\leq (1/20)|v_1-v_2|.\]
The former condition comes from $d\alpha_0=0$, and the latter is just a standard property of the 
exponential map.
Then all $u,v\in B_\delta(H_f)$ satisfy
\begin{equation*}
  \begin{split}
   \left|d(\exp(u+\alpha(u)),\exp(v+\alpha(v)))-|u+\alpha(u)-v-\alpha(v)|\right|
   \leq (1/20) \left|u+\alpha(u)-v-\alpha(v)\right| \\
   \leq (1/20) \norm{u-v}_{L^\infty(\affine)} +(1/20)\norm{\alpha(u)-\alpha(v)}_{L^\infty(\affine)}
   \leq (1/20 + 1/400) \norm{u-v}_{L^\infty(\affine)}, 
  \end{split}
\end{equation*}
and 
\begin{equation*}
   \left| |u+\alpha(u)-v-\alpha(v)| - |u-v| \right|  \leq 
   \left| \alpha(u)-\alpha(v) \right| 
   \leq (1/20) \norm{u-v}_{L^\infty(\affine)}.
\end{equation*}
These inequalities imply 
the condition (ii):
\[ \left|d(\exp(u+\alpha(v)),\exp(v+\alpha(v)))-|u-v|\right| \leq 
    (1/8)\norm{u-v}_{L^\infty(\affine)}.\]
\end{proof}

\section{Study of $H_f$: proof of Proposition \ref{prop: structure of H_f}}\label{section: study of H_f}

In this section we prove Proposition \ref{prop: structure of H_f}.
Let $R>0$, and let $\Lambda =[a_1,b_1]\times [a_2,b_2]\subset \affine$ be an $R$-square
(i.e. $b_1=a_1+R$ and $b_2=a_2+R$).
For $0<r<R/2$, we set 
\[ \partial_r\Lambda = \{([a_1,a_1+r)\cup (b_1-r,b_1])\times [a_2,b_2]\} \cup
  \{[a_1,b_1]\times ([a_2,a_2+r) \cup (b_2-r,b_2])\}.\]
(This notation is used only in this section.
It conflicts with the notation $\partial_r \Omega$ introduced in Section \ref{subsection: review of mean dimension}.)
The following is a preliminary version of Proposition \ref{prop: structure of H_f}.
\begin{proposition} \label{prop: preliminary of prop: structure of H_f}
Let $f:\affine\to \affine P^N$ be a Brody curve.
Let $\varepsilon>0$, and let $\Lambda\subset \affine$ be an $R$-square with $R>2$.
Then there exists a finite dimensional complex subspace $W\subset \Omega^0(E)$ 
(the space of $\mathcal{C}^\infty$-sections of $E=f^*T\affine P^N$)
satisfying the following three conditions.

\noindent 
(i) 
\[ \dim_\affine W\geq (N+1)\int_{\Lambda}|df|^2dxdy -C_{\varepsilon} R,\]
where $C_{\varepsilon}$ is a constant depending only on $\varepsilon$.
(The important point is that it is independent of $R$.)

\noindent 
(ii) All $u\in W$ satisfy $u=0$ outside of $\Lambda$.

\noindent 
(iii) All $u\in W$ satisfy  
$\norm{\bar{\partial}u}_{L^\infty(\affine)} \leq \varepsilon \norm{u}_{L^\infty(\affine)}$.
\end{proposition}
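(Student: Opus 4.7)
The plan is to construct $W$ via a torus compactification of $\Lambda$, Riemann-Roch, and a bump-function cutoff. Enlarge $\Lambda$ to a $(R+2)$-square $\tilde{\Lambda}$ viewed as a fundamental domain for a flat torus $T := \affine/\Gamma$, where $\Gamma$ is generated by the side-vectors of $\tilde{\Lambda}$. Glue $E|_{\tilde{\Lambda}}$ across opposite edges of $\tilde{\Lambda}$ via a smooth trivialization of $E|_{\tilde{\Lambda}\setminus\Lambda}$, with the winding of the gluing chosen so that the resulting holomorphic Hermitian bundle $\tilde{E}$ on $T$ has topological degree $\deg\tilde{E}\geq (N+1)\int_\Lambda|df|^2\,dxdy - CR$, for some $C$ independent of $R$. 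The Brody bound $|df|\leq 1$ gives uniform $\mathcal{C}^k$-control on local holomorphic frames of $E$ (cf.\ Lemma \ref{lemma: Sobolev embedding}), so the Hermitian structure on the buffer $\tilde{\Lambda}\setminus\Lambda$ (area $O(R)$) can be extended with pointwise-bounded Chern form, whose buffer integral contributes only $O(R)$ to the degree, via (\ref{eq: Chern form}). After twisting $\tilde{E}$ by a line bundle of small positive degree (trivial on $\Lambda$) to arrange $H^1(T,\tilde{E})=0$ by Atiyah's classification of bundles on elliptic curves, Riemann-Roch on the genus-one $T$ yields $\dim_\affine H^0(T,\tilde{E}) \geq (N+1)\int_\Lambda|df|^2\,dxdy - C_\varepsilon R$.

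For the cutoff, fix a smooth bump $\chi:\affine\to[0,1]$ with $\supp(\chi)\subset\Lambda$, $\chi\equiv 1$ on $\Lambda\setminus\partial_r\Lambda$ for $r = r(\varepsilon)$ of order $1/\varepsilon$, and $\norm{\bar{\partial}\chi}_{L^\infty(\affine)}\leq\varepsilon/C_0$. Rather than cut off arbitrary sections (whose $L^\infty$-norms may concentrate in the buffer $\partial_r\Lambda$), I pass to a subspace $V\subset H^0(T,\tilde{E})$ consisting of sections whose sup-norm is attained on the inner region $\Lambda\setminus\partial_r\Lambda$ up to a factor $C_0$; such a $V$ is produced by expanding in a Bergman-kernel basis of $H^0(T,\tilde{E})$ and discarding the kernels peaked in $\partial_r\Lambda$, incurring a dimension loss of $O(rR)=C_\varepsilon R$. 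For each $s\in V$, lift to a holomorphic section $\tilde{s}$ of $E$ on the fundamental domain $\tilde{\Lambda}$ and set $u_s := \chi\tilde{s}\in\Omega^0(E)$. Then $u_s$ vanishes outside $\Lambda$ (condition (ii)), and $\bar{\partial}u_s = (\bar{\partial}\chi)\tilde{s}$. By construction of $V$, $\norm{\tilde{s}}_{L^\infty(\supp(\bar{\partial}\chi))} \leq C_0\norm{\tilde{s}}_{L^\infty(\Lambda\setminus\partial_r\Lambda)} = C_0\norm{u_s}_{L^\infty(\affine)}$, whence $\norm{\bar{\partial}u_s}_{L^\infty(\affine)}\leq\varepsilon\norm{u_s}_{L^\infty(\affine)}$, which is condition (iii). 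Unique continuation for holomorphic sections on the connected torus $T$ makes $s\mapsto u_s$ injective, so $W := \{u_s : s\in V\}$ satisfies (i) with dimension at least $(N+1)\int_\Lambda|df|^2\,dxdy - C_\varepsilon R$.

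The principal obstacles are twofold. First, in the compactification step one must choose the gluing transition with the correct winding to make $\deg\tilde{E}$ an integer within $O(R)$ of the target $(N+1)\int_\Lambda|df|^2$, while arranging the Hermitian structure on the buffer to have $\mathcal{C}^k$-norms bounded uniformly in $R$; the Brody bound is the essential input giving $R$-independent frame estimates. Second, producing the interior-concentrated subspace $V$ requires an effective Bergman-kernel decay estimate for $\tilde{E}$ on $T$ with constants depending only on the curvature of $\tilde{E}$ and not on $R$. This decay is standard when the curvature of $\tilde{E}$ is uniformly positive on the interior region; in the general Brody-curve setting where $|df|$ may vanish, it requires a preliminary conformal perturbation of the Hermitian metric, analogous to Lemma \ref{lemma: perturbation technique} but not needing non-degeneracy (so that the resulting positivity is only integral, not pointwise, and one must work in a mean-value/Bergman formulation). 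Once these two points are settled, the Riemann-Roch dimension count and the final estimate on $\bar{\partial}u_s$ are routine.
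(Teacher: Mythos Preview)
Your overall architecture---compactify $\Lambda$ to a torus, apply Riemann--Roch to count holomorphic sections, then cut off---matches the paper's. The degree estimate via (\ref{eq: Chern form}) and the $O(R)$ buffer contribution are also the same in spirit. Two points deserve comment.

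First, the twist by a line bundle to force $H^1=0$ is unnecessary: on a genus-one curve Riemann--Roch already gives $\dim H^0 \geq \int_T c_1(\tilde{E})$, which is all that is needed for (i). The paper simply uses this inequality.

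Second, and this is the real gap, your mechanism for producing the interior-concentrated subspace $V$ is not justified. You want a \emph{linear} subspace on which every nonzero section satisfies $\norm{s}_{L^\infty(\partial_r\Lambda)} \leq C_0^{-1}\norm{s}_{L^\infty(\Lambda\setminus\partial_r\Lambda)}$, but ``discarding Bergman-kernel basis elements peaked in $\partial_r\Lambda$'' does not give this: interior-concentration of individual basis elements says nothing about arbitrary linear combinations. To make a Bergman/peak-section argument work you would need exponential off-diagonal decay with constants independent of $R$, which in turn needs uniformly positive curvature---and as you yourself note, Proposition~\ref{prop: preliminary of prop: structure of H_f} is stated for \emph{arbitrary} Brody curves, where $|df|$ may vanish on large sets. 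Your suggested fix (a metric perturbation ``not needing non-degeneracy'') is exactly what Lemma~\ref{lemma: perturbation technique} cannot deliver.

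The paper sidesteps all of this with an elementary device: it first proves a uniform gradient bound $\norm{\nabla' u}_{L^\infty(T)}\leq K\norm{u}_{L^\infty(T)}$ for $u\in H^0_{\nabla'}$ (Lemma~\ref{lemma: quantitative elliptic estimate}), which uses only the Brody condition and local elliptic regularity, no curvature positivity. Then it chooses a $\tau$-net $p_1,\dots,p_M$ in the boundary collar (with $M=O(R)$) and defines $V$ by the \emph{linear} conditions $u(p_i)=0$. On this $V$, the gradient bound forces $|u|\leq \tau K\norm{u}_{L^\infty}$ throughout the collar, so the maximum of $|u|$ is attained in the interior and the cutoff argument goes through. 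This replaces your Bergman-kernel step with a two-line Lipschitz estimate.
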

\begin{proof}
Set $\Lambda = [a_1,b_1]\times [a_2,b_2]$.
Let $\varphi_i:\mathbb{R}\to \mathbb{R}$ $(i=1,2)$ be smooth functions such that $0\leq \varphi_i'\leq 1$, 
$\varphi_i(x)=x$ over $[a_i+1/2,b_i-1/2]$, $\varphi(x)=\varphi(a_i+1/4)$ over $x\leq a_i+1/4$
and $\varphi_i(x) = \varphi(b_i-1/4)$ over $x\geq b_i-1/4$.
Moreover we assume that, for $k\geq 1$, $|\varphi_i^{(k)}|\leq \const_k$
(depending only on $k\geq 1$).

We define a $\mathcal{C}^\infty$-map $\tilde{f}:\affine \to \affine P^N$ by 
$\tilde{f}(x+\sqrt{-1}y) := f(\varphi_1(x)+\sqrt{-1}\varphi_2(y))$.
We have $|d\tilde{f}|(z) := \max_{u\in T_z\affine,\, |u|=1}|d\tilde{f}(u)|\leq 1$ for all $z\in \affine$.
Let $\tilde{E} :=\tilde{f}^*T\affine P^N$ be the pull-back of $T\affine P^N$ by $\tilde{f}$.
$\tilde{E}$ is a complex vector bundle over $\affine$ with the 
Hermitian metric $\tilde{h}$ (the pull-back of the Fubini-Study metric) and
the unitary connection $\tilde{\nabla}$ (the pull-back of the Levi-Civita connection on $T\affine P^N$).
From the definition of $\tilde{f}$, the connection $\tilde{\nabla}$ is flat over $\partial_{1/4}\Lambda$.
Flat connections over $\partial_{1/4}\Lambda$ are classified by their holonomy maps
$\pi_1(\partial_{1/4}\Lambda)\to U(N)$.
Hence there is a bundle trivialization (as a Hermitian vector bundle) $g$ of $\tilde{E}$ over $\partial_{1/4}\Lambda$
such that $g(\tilde{\nabla})= d+A$ ($A$: connection matrix) satisfies 
\[ \norm{A}_{\mathcal{C}^k(\partial_{1/4}\Lambda)} \leq \const_k \quad (k\geq 0).\]
Here $\const_k$ are universal constants depending only on $k$.
(The important point is that they are independent of $R$.)
Let $\psi:\Lambda\to [0,1]$ be a cut-off function such that 
$\psi = 1$ over $\Lambda\setminus \partial_{1/5}\Lambda$, $\psi=0$ over $\partial_{1/6}\Lambda$,
and $\norm{\psi}_{\mathcal{C}^k(\Lambda)}\leq \const_k$.
We define a unitary connection $\nabla'$ on $\tilde{E}$ over $\Lambda$ by 
$\nabla' := g^{-1}(d+\psi A)$.
($\nabla'=\tilde{\nabla}$ over $\Lambda\setminus \partial_{1/5}\Lambda$.)
Under the trivialization $g$, the metric $\tilde{h}$ and the connection $\nabla'$ are equal to the standard metric and the 
product connection of $\partial_{1/6}\Lambda \times \affine^N$ over $\partial_{1/6}\Lambda$.

Consider an elliptic curve $\mathbb{T} := \affine/(R \mathbb{Z} + R\sqrt{-1}\mathbb{Z})$, and 
let $\pi:\affine\to \mathbb{T}$ be the natural projection.
We define a complex vector bundle $E'$ over $\mathbb{T}$ as follows.
$E' = \tilde{E}$ over 
$\pi(\Lambda\setminus \partial_{1/5}\Lambda)\cong  \Lambda\setminus \partial_{1/5}\Lambda$, 
and $E'|_{\pi(\partial_{1/4}\Lambda)}$ is 
equal to the product bundle $\pi(\partial_{1/4}\Lambda)\times \affine^N$.
We glue these by the map $g$.
The metric $\tilde{h}$ and the connection $\nabla'$ naturally descend to the metric and connection on 
$E'$ (also denoted by $\tilde{h}$ and $\nabla'$).

Let $\Theta' :=[\nabla'_{\partial/\partial z},\nabla'_{\partial/\partial\bar{z}}]$ be the curvature of $\nabla'$.
From the definition, $\Theta' = [\nabla_{\partial/\partial z},\nabla_{\partial /\partial \bar{z}}]$ over 
$\pi(\Lambda\setminus \partial_{1/2}\Lambda)\cong \Lambda\setminus \partial_{1/2}\Lambda$, and
$|\Theta'|\leq \const$ (a universal constant) all over $\mathbb{T}$.
Then by (\ref{eq: Chern form})
\begin{equation} \label{eq: c_1 of E'}
 \int_{\mathbb{T}}c_1(E') = \frac{\sqrt{-1}}{2\pi}\int_{\mathbb{T}}\mathrm{tr}(\Theta') dz d\bar{z}
 \geq (N+1)\int_{\Lambda} |df|^2 dxdy -\const \cdot R.
\end{equation}
Let $\bar{\partial}_{\nabla'}:\Omega^0(E')\to \Omega^{0,1}(E')$ be the Dolbeault operator
over $\mathbb{T}$ twisted by the unitary connection $\nabla'$
(i.e. the $(0,1)$-part of the covariant derivative $\nabla':\Omega^0(E)\to \Omega^1(E)$).
Let $H^0_{\nabla'}$ be the space of $u\in \Omega^0(E')$ satisfying $\bar{\partial}_{\nabla'}u=0$.
From the Riemann-Roch formula and the above (\ref{eq: c_1 of E'})
\begin{equation} \label{eq: riemann-roch}
  \dim_\affine H^0_{\nabla'} \geq \int_{\mathbb{T}}c_1(E') \geq (N+1)\int_{\Lambda}|df|^2 dxdy -\const\cdot R.
\end{equation}
\begin{lemma} \label{lemma: quantitative elliptic estimate}
  For all $u\in H^0_{\nabla'}$,
\[ \norm{\nabla' u}_{L^\infty(\mathbb{T})} \leq K \norm{u}_{L^\infty(\mathbb{T})}.\]
Here $K$ is a universal constant (independent of $f$, $R$, $\Lambda$).
\end{lemma}
\begin{proof}
The connection $\nabla'$ has the following property:
There is a universal constant $r>0$ such that 
for every $p\in \mathbb{T}$ there is a bundle trivialization $v$ of a Hermitian vector bundle $E'$ 
over $D_r(p)$ satisfying $v(\nabla') = d+A'$ with 
\[ \norm{A'}_{\mathcal{C}^k(D_r(p))} \leq \const_k \quad (k\geq 0). \]
Then the result follows from the elliptic regularity.
\end{proof}
Let $\tau =\tau(\varepsilon)>0$ be a small number which will be fixed later.
We take points $p_1,\dots,p_M \in \pi(\partial_1\Lambda)$ with $M\leq \const_\tau\cdot R$ such that 
for every $p\in \pi(\partial_1\Lambda)$ there is $p_i$ satisfying 
$d(p,p_i)\leq \tau$.
We define $V\subset H^0_{\nabla'}$ as the space of $u\in H^0_{\nabla'}$ satisfying 
$u(p_i)=0$ for all $i=1,\dots,M$.
From (\ref{eq: riemann-roch}),
\begin{equation}  \label{eq: riemann-roch minus the number of points}
 \dim_\affine V\geq \dim_{\affine}H^0_{\nabla'} - \dim_{\affine}\left(\bigoplus_{i=1}^M E'_{p_i}\right)
 \geq (N+1)\int_{\Lambda} |df|^2dxdy - C_{\varepsilon} R.
\end{equation}
Let $u\in V$ and $p\in \pi(\partial_1\Lambda)$.
Take $p_i$ satisfying $d(p,p_i)\leq \tau$.
From $u(p_i)=0$ and 
Lemma \ref{lemma: quantitative elliptic estimate}, 
\[ |u(p)|\leq \tau\norm{\nabla' u}_{L^\infty(\mathbb{T})} \leq \tau K \norm{u}_{L^\infty(\mathbb{T})}.\]
We choose $\tau>0$ so that $\tau K < 1$.
Then the maximum of $|u|$ is attained in $\mathbb{T}\setminus \pi(\partial_1\Lambda)$.

Let $\phi: \affine\to \mathbb{R}$ be a cut-off such that $\phi=1$ over $\Lambda\setminus \partial_1\Lambda$, 
$\supp(\phi)$ is contained in the interior of 
$\Lambda\setminus \partial_{1/2}\Lambda$, and $|d\phi|\leq 10$.
For $u\in V$, we set $u' := \phi u$.
Here we identify the region $\Lambda\setminus \partial_{1/2}\Lambda$ with $\pi(\Lambda\setminus \partial_{1/2}\Lambda)$
where we have $E'=E$, 
and we consider $u'$ as a section of $E$ over the plane $\affine$.
Set $W:=\{u'|u\in V\}$.
We have $\norm{u'}_{L^\infty(\affine)}=\norm{u}_{L^\infty(\mathbb{T})}$.
Hence, by (\ref{eq: riemann-roch minus the number of points}), we get the condition (i):
\[ \dim_\affine W = \dim_\affine V \geq (N+1)\int_{\Lambda} |df|^2dxdy - C_{\varepsilon} R.\]
The condition (ii) is obviously satisfied.
$\bar{\partial}u' = \bar{\partial}\phi\otimes u$ is supported in $\partial_1\Lambda$.
\[ \norm{\bar{\partial}u'}_{L^\infty(\affine)} \leq 10\norm{u}_{L^\infty(\pi(\partial_1\Lambda))}
   \leq 10\tau K \norm{u}_{L^\infty(\mathbb{T})} 
   = 10\tau K \norm{u'}_{L^\infty(\affine)}.\]
We choose $\tau>0$ so that $10\tau K \leq \varepsilon$.
Then the condition (iii) is satisfied.
\end{proof}
\begin{proof}[Proof of Proposition \ref{prop: structure of H_f}]
Let $\varepsilon>0$ be a small number which will be fixed later.
By Proposition \ref{prop: preliminary of prop: structure of H_f}, for this $\varepsilon$ and 
any $R$-square $\Lambda$ $(R>2)$, 
there is a finite dimensional complex subspace $W\subset \Omega^0(E)$ satisfying the 
conditions (i), (ii), (iii) in Proposition \ref{prop: preliminary of prop: structure of H_f}.
By Proposition \ref{prop: existence of inverse of square}, there is a linear map 
\[ W\to \Omega^{0,1}(E), \quad u \mapsto a ,\]
such that 
\[ \bar{\partial}\bar{\partial}^*_{\varphi}a = \bar{\partial}u, \quad   
   \norm{\bar{\partial}^*_\varphi a}_{L^\infty(\affine)} \leq C'_f \norm{\bar{\partial}u}_{L^\infty(\affine)}
   \leq C'_f \cdot \varepsilon \norm{u}_{L^\infty(\affine)}.\]
Set $u':=u-\bar{\partial}^*_\varphi a$.
Then $\bar{\partial}u'=0$ and $\norm{u'}_{L^\infty(\affine)}\geq (1-C'_f\varepsilon)\norm{u}_{L^\infty(\affine)}$.
We choose $\varepsilon>0$ so that $1-C'_f\varepsilon>0$.
We set $V:=\{u'|u\in W\}$. Then $V\subset H_f$ and 
\[ \dim_\affine V=\dim_\affine W \geq (N+1)\int_\Lambda |df|^2dxdy -C_{\varepsilon}R.\]
For $u\in W$ (recall $\supp(u)\subset \Lambda$)
\[ \norm{u'}_{L^\infty(\affine)} \leq (1+C'_f\varepsilon)\norm{u}_{L^\infty(\affine)}
  = (1+C'_f\varepsilon)\norm{u}_{L^\infty(\Lambda)}, \]
\[ \norm{u'}_{L^\infty(\Lambda)}\geq (1-C'_f\varepsilon) \norm{u}_{L^\infty(\Lambda)} .\]
Hence 
\[ \norm{u'}_{L^\infty(\affine)} \leq \frac{1+C'_f\varepsilon}{1-C'_f\varepsilon}\norm{u'}_{L^\infty(\Lambda)}.\]
We choose $\varepsilon>0$ so small that 
\[ \frac{1+C'_f\varepsilon}{1-C'_f\varepsilon} \leq 2.\]
\end{proof}

\section{Infinite gluing: proof of Theorem \ref{thm: there are many non-degenerate curves}}
\label{section: infinite gluing}
We prove Theorem \ref{thm: there are many non-degenerate curves} in this section.
Our method is gluing:
We glue infinitely many rational curves to a (possibly degenerate) Brody curve $f:\affine\to \affine P^N$, 
and construct a non-degenerate one.

A kind of ``infinite gluing construction'' is classically used for the proof of Mittag-Leffler's theorem.
Probably another origin of infinite gluing construction is the shadowing lemma in dynamical system theory 
(for example, see Bowen \cite[Chapter 3]{Bowen}).
Angenent \cite{Angenent} developed a shadowing lemma for an elliptic PDE.
Gromov \cite[p. 403]{Gromov}
suggested an idea of gluing infinitely many rational curves to a (pseudo-)holomorphic curve.
Macr\`{i}-Nolasco-Ricciardi \cite{Macri-Nolasco-Ricciardi} developed gluing 
infinitely many selfdual vortices.
Gournay \cite{Gournay-thesis, Gournay-interpolation} studied 
an infinite gluing method for pseudo-holomorphic curves.
Tsukamoto \cite{Tsukamoto-Nagoya-gluing, Tsukamoto-MPAG} studied gluing infinitely many Yang-Mills 
instantons.

First we establish a result on gluing one rational curve:
\begin{proposition} \label{prop: gluing one rational curve}
There are $\delta_0>0$, $R_0>0$ and $K>0$ satisfying the following statement.
Let $f:\affine \to \affine P^N$ be a Brody curve.
If $f$ satisfies $\norm{df}_{L^\infty(D_R(p))} < \delta_0$ for some $p\in \affine$ and $R\geq R_0+1$, 
then there exists a holomorphic curve $g:\affine \to \affine P^N$ satisfying the following three conditions.

\noindent 
(i) $\delta_0\leq \norm{dg}_{L^\infty(D_R(p))}\leq 2/3$.

\noindent 
(ii) $||dg|(z)-|df|(z)|\leq K/|z-p|^3$ over $|z-p|>R$.

\noindent 
(iii) $d(f(z),g(z))\leq K/|z-p|^3$ for $z\neq p$.
\end{proposition}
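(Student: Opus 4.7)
The plan is to construct $g$ by grafting a degree-$3$ rational curve onto $f$ at the point $p$: we will arrange $g$ to coincide with $f$ outside a neighborhood of $p$ up to a correction of order $1/(z-p)^3$, while near $p$ the curve $g$ opens up a new branch that forces $|dg|$ to be non-trivial even though $|df|$ was assumed small there.

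After translating to $p=0$ and applying a unitary transformation of $\affine P^N$ so that $f(0)=[1:0:\cdots:0]$, I would choose a global entire lift $F=(F_0,\dots,F_N)$ of $f$ with $F_0(0)=1$ and $F_i(0)=0$ for $i\geq 1$. The assumption $\norm{df}_{L^\infty(D_R(0))}<\delta_0$ forces $F_0\approx 1$ and $F_i\approx 0$ on an inner disk $D_{R_0}(0)$ once $\delta_0 R_0$ is chosen to be a sufficiently small universal constant. I would then set
\[
  g(z)\,:=\,[\,z^3 F_0(z)\,:\,z^3 F_1(z)+a_1\,:\,\cdots\,:\,z^3 F_N(z)+a_N\,]
\]
for a fixed vector $a=(a_1,\dots,a_N)\in\affine^N$ of moderate universal norm. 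Note that $g(0)=[0:a_1:\cdots:a_N]$ is far from $f(0)$, while for $|z|\to\infty$ the homogeneous coordinates are projectively equivalent to $F+z^{-3}(0,a_1,\dots,a_N)$, converging to $f$ at rate $|a|/|z|^3$.

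To verify (i) I would split $D_R(0)=D_{R_0}(0)\cup(D_R(0)\setminus D_{R_0}(0))$. On the inner disk $g$ is uniformly $\mathcal{C}^1$-close to the explicit rational model $[z^3:a_1:\cdots:a_N]$, whose spherical derivative equals
\[
  |dg_{\mathrm{model}}|^2\,=\,\frac{9|z|^4|a|^2}{\pi(|z|^6+|a|^2)^2},
\]
attaining its maximum at $|z|=|a|^{1/3}2^{-1/6}$; choosing $|a|$ above an explicit universal threshold makes this maximum $\leq 2/3$ while still bounded below by some $\delta_0>0$. On the outer annulus the constant $(0,a)$ is dominated by $z^3 F$, so $|dg|$ differs from $|df|$ by a small error and stays below $\delta_0<2/3$. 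For (ii) and (iii), I would use the homogeneous-coordinate formulas
\[
  \sin^2 d(f,g)\,=\,\frac{|F|^2|a|^2-|\langle F,a'\rangle|^2}{|F|^2|W|^2},\qquad |dg|^2\,=\,\tfrac{1}{4\pi}\Delta\log|W|^2,
\]
with $W=z^3 F+a'$ and $a'=(0,a_1,\dots,a_N)$, and expand $\log|W|^2=6\log|z|+\log|F|^2+\log(1+\varepsilon)$ with $\varepsilon=O(|a|/(|z|^3|F|))$; the $6\log|z|$ term is harmonic away from $0$, and direct computation of the Laplacian of the remaining correction yields the $K/|z|^3$ decay after two derivatives.

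The main obstacle will be obtaining (ii) and (iii) with a constant $K$ independent of $f$: the projective formula produces $d(f,g)\lesssim |a|/(|z|^3|F|)$, so a uniform lower bound on $|F|$ on $\{|z|>R\}$ is needed. I would handle this either by exploiting the Brody growth control $\Delta\log|F|^2=4\pi|df|^2\leq 4\pi$, which bounds the oscillation of $\log|F|^2$ and allows one to rescale the lift so that $|F|\geq 1$ on the relevant region, or by observing that $d(f,g)$ and $|dg|$ are projective invariants so that the lift can be renormalized pointwise in each annulus. Once the decay rates are established, choosing the universal constants $\delta_0$, $R_0$, $K$ in the correct order (first $|a|$, then $\delta_0$ small relative to $|a|^{2/3}$, then $R_0$ so that $\delta_0 R_0$ is small enough to make $f$ nearly constant on $D_{R_0}(0)$) completes the proof.
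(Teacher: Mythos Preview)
Your construction is exactly the paper's: after normalizing $p=0$ and $f(0)=[1:0:\cdots:0]$, the paper writes $f=[1:f_1:\cdots:f_N]$ with meromorphic $f_i$ and sets $g_i=f_i+a/z^3$, i.e.\ your $g=[z^3 F_0:z^3 F_1+a_1:\cdots]$ in the special lift $F_0\equiv 1$. The choice of the parameter $a$ is also the same (fixed so that the model rational curve has a prescribed small sup of spherical derivative).

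Where you diverge from the paper is in the verification of (ii)--(iii), and this is precisely where your self-identified ``main obstacle'' appears. The paper never introduces a global entire lift and never needs a lower bound on $|F|$: it stays in the affine chart and uses the explicit formula
\[
|df|(z)=\frac{\sqrt{\sum_i|f_i'|^2+\sum_{i<j}|f_i'f_j-f_if_j'|^2}}{\sqrt{\pi}\,(1+\sum_i|f_i|^2)},
\]
then estimates each ratio $\frac{|g_i'|}{1+\sum|g_k|^2}$, $\frac{|g_i'g_j-g_ig_j'|}{1+\sum|g_k|^2}$ against its $f$-counterpart. Because $g_i-f_i=a/z^3$ and $g_i'-f_i'=-3a/z^4$, and because the Brody bound $|df|\le 1$ gives uniform control of $|f_i'|/(1+\sum|f_k|^2)$ etc., each difference is $O(1/r^3)$ with a universal constant; then $||dg|-|df||\le K/r^3$ follows from $|\sqrt{\sum x_k^2}-\sqrt{\sum y_k^2}|\le\sqrt{\sum(x_k-y_k)^2}$. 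Condition (iii) is a one-line consequence of $ds^2\le(2/\pi)\sum|dz_i|^2$ on the affine chart. No renormalization of lifts, no subharmonic oscillation bounds, no $\Delta\log(1+\varepsilon)$ expansion is needed.

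Your $\Delta\log|W|^2$ route is not wrong in spirit, but as stated it has a genuine weak point beyond the $|F|$ lower bound: it produces an estimate on $|dg|^2-|df|^2$, and converting this to $||dg|-|df||=O(1/r^3)$ is not automatic when $|df|+|dg|$ can be small. Your first proposed fix (using $\Delta\log|F|^2\le 4\pi$ to bound the oscillation of $\log|F|^2$) gives subharmonicity, hence upper bounds on $\log|F|^2$, not the lower bound you need. Your second fix (renormalize the lift locally) is exactly what the paper does globally by taking $F_0\equiv 1$; if you follow that thread you will land on the paper's computation, which is purely algebraic and makes the uniformity in $f$ transparent.
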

\begin{proof}
The proof is just a calculation.
It may be helpful for some readers to consider the case of $N=1$ by themselves.
Let $\varepsilon>0$ be a sufficiently small number.
$\delta_0$, $R_0$, $K$ and $\varepsilon$ will be fixed later.
Several conditions will be imposed on them through the argument, but basically they need to satisfy 
\[ \delta_0\ll \frac{\varepsilon}{R_0}, \quad R_0\gg 1, \quad \varepsilon\ll \frac{1}{R_0^4}.\]

Fix $a>0$ so that the curve $q:\affine\to \affine P^N$ defined by 
$q(z) := [1:a/z^3:\dots:a/z^3]$ satisfies 
$\norm{dq}_{L^\infty(\affine)} = 1/12$.
Here 
\[ |dq|(z) = \frac{3a\sqrt{N}r^2}{\sqrt{\pi}(r^6+Na^2)}\quad (r=|z|).\]
We can suppose $\norm{dq}_{L^\infty(D_{R_0}(0))}=1/12$ since we choose $R_0\gg 1$.

From the symmetry we can assume $p=0$ and $f(0) = [1:0:\dots:0]$.
Let $f(z) = [1:f_1(z):\dots:f_N(z)]$ where $f_i(z)$ are meromorphic functions in $\affine$.
Since $|df|\leq \delta_0$ over $|z|\leq R$ with $R\geq R_0+1$, 
if we choose $\delta_0$ sufficiently small ($\delta_0\ll \varepsilon/R_0$), 
we have 
\begin{equation}\label{eq: f_i are smallon R_0 disk}
 |f_i(z)|\leq \varepsilon, \quad  |f_i'(z)| \leq \varepsilon \quad (|z|\leq R_0).
\end{equation}

Set $g_i(z) := f_i(z) +a/z^3$, and we define $g:\affine \to \affine P^N$ by 
$g(z):=[1:g_1(z):\dots:g_N(z)]$.
We will prove that this map $g$ satisfies the conditions (i), (ii), (iii).

First we study the condition (iii).
The Fubini-Study metric is given by 
\[ ds^2 = 
 \frac{\sum_{i=1}^N |dz_i|^2 + \sum_{1\leq i<j\leq N}|z_j dz_i-z_i dz_j|^2}{\pi(1+\sum |z_i|^2)^2} \quad
   \text{on $\{[1:z_1:\dots:z_N]\}$}.\]
\begin{equation*}
  \begin{split}
  ds^2 \leq \frac{\sum |dz_i|^2 + 2(\sum |z_i|^2)(\sum |dz_i|^2)}{\pi(1+\sum |z_i|^2)^2} 
       \leq \frac{2(1+\sum |z_i|^2)\sum |dz_i|^2}{\pi(1+\sum |z_i|^2)^2} \leq \frac{2}{\pi}\sum |dz_i|^2
  \end{split}
\end{equation*}
Hence $ds\leq \sqrt{2/\pi}\sqrt{\sum_{i=1}^N |dz_i|^2}$.
Thus for $f(z) =[1:f_1(z):\dots:f_N(z)]$ and $g(z)=[1:f_1(z)+a/z^3:\dots:f_N(z)+a/z^3]$
we get 
\begin{equation} \label{eq: condition (iii) for gluing}
  d(f(z), g(z))\leq \sqrt{2/\pi}\sqrt{\sum_{i=1}^N |a/z^3|^2} = \frac{a\sqrt{2N/\pi}}{|z|^3}.
\end{equation}

Next we study the conditions (i) and (ii).
We have 
\[ |df|(z) = 
\frac{\sqrt{\sum |f_i'(z)|^2 + \sum_{i<j}|f_i'(z)f_j(z)-f_i(z)f_j'(z)|^2}}{\sqrt{\pi}(1+\sum |f_i(z)|^2)}, \]
\[ |dg|(z) = 
\frac{\sqrt{\sum |g_i'(z)|^2 + \sum_{i<j}|g_i'(z)g_j(z)-g_i(z)g_j'(z)|^2}}{\sqrt{\pi}(1+\sum |g_i(z)|^2)}, \]
where 
\[ g_i' = f_i' -\frac{3a}{z^4},\quad 
   g_i'g_j-g_ig_j' = (f_i'f_j-f_if_j') +\frac{3a}{z^4}(f_i-f_j) + \frac{a}{z^3}(f_i'-f_j').\]

Case 1: Suppose $r:=|z|\leq R_0$. 
We will prove $\delta_0\leq \norm{dg}_{L^\infty(D_{R_0}(0))}\leq 2/3$.
From (\ref{eq: f_i are smallon R_0 disk}),
\[ |g_i(z)|\leq \varepsilon + \frac{a}{r^3} \leq \frac{2a}{r^3},\quad 
   |g_i'(z)|\geq \frac{3a}{r^4}-\varepsilon \geq \frac{3a}{2r^4}.\]
Here we have supposed $\varepsilon \leq \min(a/R_0^3, 3a/(2R_0^4))$.
Then 
\[ |dg|(z) \geq \frac{\sqrt{N}(3a/(2r^4))}{\sqrt{\pi}(1+4Na^2/r^6)}
    = \frac{3a\sqrt{N}r^2}{2\sqrt{\pi}(r^6+4Na^2)}\geq \frac{3a\sqrt{N}r^2}{8\sqrt{\pi}(r^6+Na^2)}
    = \frac{|dq|(z)}{8}.\]
Hence $\norm{dg}_{L^\infty(D_{R_0}(0))}\geq (1/8)\norm{dq}_{L^\infty(D_{R_0}(0))} = 1/96 \geq \delta_0$.
(Here we have supposed $\delta_0\leq 1/96$.)
On the other hand, 
\[ |dg|(z) = \frac{\sqrt{\sum |3az^2-z^6f_i'|^2 + \sum_{i<j}|z^6(f_i'f_j-f_j'f_i)+3az^2(f_i-f_j)+az^3(f_i'-f_j')|^2}}
{\sqrt{\pi}(r^6+\sum |a+z^3f_i|^2)}.\]
From (\ref{eq: f_i are smallon R_0 disk}),
\[ |a+z^3f_i|\geq a -\varepsilon R_0^3\geq \frac{a}{2},\quad (\text{here we suppose $\varepsilon R_0^3\leq a/2$}).\]
\[ r^6+\sum |a+z^3f_i|^2 \geq r^6+\frac{Na^2}{4}\geq \frac{r^6+Na^2}{4}.\]
\[ |3az^2-z^6f_i'|\leq 3ar^2+r^6\varepsilon \leq r^2(3a+R_0^4\varepsilon) \leq 4ar^2, \quad
   (\text{we suppose $R_0^4\varepsilon\leq a$}).\] 
\[ |z^6(f_i'f_j-f_j'f_i)+3az^2(f_i-f_j)+az^3(f_i'-f_j')| \leq r^2(2\varepsilon^2R_0^4+6a\varepsilon +2a\varepsilon R_0)
   \leq \frac{ar^2}{\sqrt{\binom{N}{2}}}.\]
Here we have supposed $2\varepsilon^2R_0^4+6a\varepsilon +2a\varepsilon R_0\leq a/\sqrt{\binom{N}{2}}$.
Then 
\[ |dg|(z) \leq \frac{4ar^2\sqrt{16N+1}}{\sqrt{\pi}(r^6+Na^2)}\leq 
    \frac{24ar^2\sqrt{N}}{\sqrt{\pi}(r^6+Na^2)} = 8|dq|(z)\leq \frac{2}{3}, \quad 
    (|dq|\leq 1/12).\]
Thus we get $\delta_0\leq \norm{dg}_{L^\infty(D_{R_0}(0))}\leq 2/3$.

Case 2: Suppose $|z|\geq R_0$.
We will prove $||df|(z)-|dg|(z)|\leq K/r^3$ for an appropriate $K>0$.
We have 
\[ \left||f_i|^2-|g_i|^2\right|\leq (|f_i|+|g_i|) \cdot |f_i-g_i|
   \leq (2|f_i|+a/r^3)(a/r^3)\leq (2|f_i|+a/R_0^3)(a/r^3), \]
\[ \sum \left||f_i|^2-|g_i|^2\right| \leq \frac{a}{r^3}\left(2\sum |f_i| + \frac{Na}{R_0^3}\right)\leq 
   \frac{2a}{r^3}\left(1+\sum |f_i|\right) \quad (\text{we suppose $\frac{Na}{R_0^3}\leq 2$}).\] 
If $|f_i|\geq a/r^3$, then
\[ |g_i|^2 \geq \left(|f_i|-a/r^3\right)^2 \geq \frac{|f_i|^2}{2} -\frac{a^2}{r^6}
    \geq \frac{|f_i|^2}{2}-\frac{a^2}{R_0^6}, \quad ((x-y)^2\geq \frac{x^2}{2}-y^2).\]
If $|f_i| < a/r^3$, then 
\[ |g_i|^2\geq 0 >  \frac{|f_i|^2}{2} -\frac{a^2}{r^6} \geq \frac{|f_i|^2}{2}-\frac{a^2}{R_0^6}.\]
Therefore we always have $|g_i|^2 \geq |f_i|^2/2 -a^2/R_0^6$.
\[ 1+\sum |g_i|^2 \geq \left(1-\frac{Na^2}{R_0^6}\right)+\frac{1}{2}\sum |f_i|^2
   \geq \frac{1}{2}\left(1+\sum |f_i|^2\right)\quad 
(\text{we suppose $\frac{Na^2}{R_0^6}\leq \frac{1}{2}$}).\]
Hence 
\begin{equation} \label{eq: difference of denominators}
  \begin{split}
   \left|\frac{1}{1+\sum |g_i|^2}-\frac{1}{1+\sum |f_i|^2}\right|
   &\leq \frac{\frac{4a}{r^3}\left(1+\sum |f_i|\right)}{\left(1+\sum |f_i|^2\right)^2}
   \leq \frac{4a\sqrt{N+1}\sqrt{1+\sum |f_i|^2}}{r^3 \left(1+\sum |f_i|^2\right)^2}\\
   &= \frac{4a\sqrt{N+1}}{r^3\left(1+\sum |f_i|^2\right)^{3/2}}
   \leq \frac{4a\sqrt{N+1}}{r^3(1+\sum |f_i|^2)}.
  \end{split}
\end{equation}
Then, from $g_i' = f_i' - 3a/z^4$ and the above (\ref{eq: difference of denominators}), 
\begin{equation*}
 \begin{split}
 \left|\frac{|g_i'|}{1+\sum |g_k|^2}-\frac{|f_i'|}{1+\sum |f_k|^2}\right|
 &\leq \left|\frac{|g_i'|}{1+\sum |g_k|^2}-\frac{|g_i'|}{1+\sum |f_k|^2}\right|
 + \left|\frac{|g'_i|}{1+\sum |f_k|^2} -\frac{|f_i'|}{1+\sum |f_k|^2}\right| \\
 &\leq \frac{4a\sqrt{N+1}(|f_i'|+3a/r^4)}{r^3(1+\sum |f_k|^2)} 
   + \frac{3a}{r^4(1+\sum |f_k|^2)}.
 \end{split}
\end{equation*}
From $|df|\leq 1$, we have $|f_i'|/(1+\sum |f_k|^2) \leq \sqrt{\pi}$.
Hence the above is bounded by 
\[ \frac{4a\sqrt{N+1}}{r^3}(\sqrt{\pi}+3a/r^4) +3a/r^4
   \leq \frac{4a\sqrt{N+1}}{r^3}(\sqrt{\pi}+3a)+\frac{3a}{r^3} .\]
Here we have supposed $r\geq R_0\geq 1$.
Set $K_a := 4a\sqrt{N+1}(\sqrt{\pi}+3a) + 3a$. Then 
\begin{equation} \label{eq: term of difference in gluing}
   \left|\frac{|g_i'|}{1+\sum |g_k|^2}-\frac{|f_i'|}{1+\sum |f_k|^2}\right| \leq 
   \frac{K_a}{r^3}.
\end{equation}

From (\ref{eq: difference of denominators}), for $i<j$, 
\begin{equation*}
  \begin{split}
   \left|\frac{|g_i'g_j-g_j'g_i|}{1+\sum |g_k|^2}-\frac{|f_i'f_j-f_j'f_i|}{1+\sum |f_k|^2}\right|
   &\leq \left|\frac{|g_i'g_j-g_j'g_i|}{1+\sum |g_k|^2}-\frac{|g_i'g_j-g_j'g_i|}{1+\sum |f_k|^2}\right|
   + \left|\frac{|g_i'g_j-g_j'g_i|}{1+\sum |f_k|^2}-\frac{|f_i'f_j-f_j'f_i|}{1+\sum |f_k|^2}\right| \\
   &\leq \frac{4a\sqrt{N+1} |g_i'g_j-g_j'g_i|}{r^3(1+\sum |f_k|^2)}
    + \frac{|(g_i'g_j-g_j'g_i)-(f_i'f_j-f_j'f_i)|}{1+\sum |f_k|^2}
  \end{split}
\end{equation*}
From $g_i'g_j-g_j'g_i= (f_i'f_j-f_j'f_i)+(3a/z^4)(f_i-f_j)+(a/z^3)(f_i'-f_j')$, this is bounded by 
\begin{equation} \label{eq: correlation term of difference}
  \begin{split}
   \frac{4a\sqrt{N+1}}{r^3}&\left(\frac{|f_i'f_j-f_j'f_i|}{1+\sum |f_k|^2} 
   + \frac{3a(|f_i|+|f_j|)}{r^4(1+\sum |f_k|^2)} 
   + \frac{a(|f_i'|+|f_j'|)}{r^3(1+\sum |f_k|^2)}\right) \\
   &+ \frac{3a(|f_i|+|f_j|)}{r^4(1+\sum |f_k|^2)} 
   + \frac{a(|f_i'|+|f_j'|)}{r^3(1+\sum |f_k|^2)}.
  \end{split}
\end{equation}
From $|df|\leq 1$, 
\[ \frac{|f_i'f_j-f_j'f_i|}{1+\sum |f_k|^2} \leq \sqrt{\pi}, \quad 
   \frac{|f_i'|+|f_j'|}{1+\sum |f_k|^2} \leq 2\sqrt{\pi}.\]
Since $i<j$,
\[ \frac{|f_i|+|f_j|}{1+\sum |f_k|^2} \leq \frac{\sqrt{2}\sqrt{|f_i|^2+|f_j|^2}}{1+\sum |f_k|^2}\leq \sqrt{2}.\]
Hence the above (\ref{eq: correlation term of difference}) is bounded by 
\begin{equation*}
 \begin{split}
  \frac{4a\sqrt{N+1}}{r^3}\left(\sqrt{\pi}+\frac{3a\sqrt{2}}{r^4}+\frac{2a\sqrt{\pi}}{r^3}\right)
  + \frac{3a\sqrt{2}}{r^4} + \frac{2a\sqrt{\pi}}{r^3} \\
  \leq \frac{4a\sqrt{N+1}}{r^3}(\sqrt{\pi}+3a\sqrt{2}+2a\sqrt{\pi})
   + \frac{3a\sqrt{2}}{r^3} + \frac{2a\sqrt{\pi}}{r^3}.
 \end{split}
\end{equation*}
Here $r\geq R_0\geq 1$.
Set $K'_a := 4a\sqrt{N+1}(\sqrt{\pi}+3a\sqrt{2}+2a\sqrt{\pi}) + 3a\sqrt{2}+2a\sqrt{\pi}$.
Then 
\[  \left|\frac{|g_i'g_j-g_j'g_i|}{1+\sum |g_k|^2}-\frac{|f_i'f_j-f_j'f_i|}{1+\sum |f_k|^2}\right|
    \leq \frac{K'_a}{r^3}.\] 
From this and (\ref{eq: term of difference in gluing}),
\[ \left||dg|(z)-|df|(z)\right| \leq (1/\sqrt{\pi})\sqrt{N(K_a/r^3)^2+\binom{N}{2}(K'_a/r^3)^2}
   = \frac{\sqrt{N K_a^2+\binom{N}{2}(K_a')^2}}{\sqrt{\pi}r^3}.\]
Here we have used the inequality 
\[ \left|\sqrt{x_1^2+\dots+x_l^2}-\sqrt{y_1^2+\dots+y_l^2}\right|  \leq 
    \sqrt{(x_1-y_1)^2+\dots+(x_l-y_l)^2}.\]
Set 
\[ K:= \max\left(a\sqrt{2N/\pi}, \sqrt{NK_a^2+\binom{N}{2}(K'_a)^2}/\sqrt{\pi}\right). \]
(This $K$ satisfies the condition (iii) by (\ref{eq: condition (iii) for gluing}).)
Then
\[ \left||df|(z)-|dg|(z)\right| \leq \frac{K}{r^3} \quad (r\geq R_0).\]
Thus we have proved the condition (ii).

For $R_0\leq |z|\leq R$, 
\[ |dg|(z)\leq \norm{df}_{L^\infty(D_R(0))} + \frac{K}{R_0^3}\leq \delta_0+\frac{1}{2}\leq \frac{2}{3},\]
where we have chosen $R_0$ and $\delta_0$ so that $K/R_0^3\leq 1/2$ and $\delta_0\leq 1/6$.
In Case 1, we proved $\delta_0\leq \norm{dg}_{L^\infty(D_{R_0}(0))}\leq 2/3$.
Thus we get the condition (i):
\[ \delta_0\leq \norm{dg}_{L^\infty(D_R(0))}\leq 2/3.\]
\end{proof}

\begin{proof}[Proof of Theorem \ref{thm: there are many non-degenerate curves}]
Let $\norm{df}_{L^\infty(\affine)}\leq 1-\tau$, $(0<\tau\leq 1)$.
Let $\delta_0$, $R_0$, $K$ be the positive numbers introduced in Proposition \ref{prop: gluing one rational curve}.
For $\varepsilon>0$, we set $\delta :=\min(\delta_0, \sqrt{\varepsilon})$.
Let $R=R(\varepsilon, \tau)\geq R_0+1$ be a large positive number which will be fixed later.

We index the elements of $\mathbb{Z}^2$ by natural numbers:
$\mathbb{Z}^2 = \{(\alpha_1, \beta_1), (\alpha_2,\beta_2),(\alpha_3,\beta_3),\dots\}$.
For $n\geq 1$, we set 
$p_n := 2R(\alpha_n+\sqrt{-1}\beta_n)$ and 
$\Lambda_n := \{x+y\sqrt{-1}\in \affine |\,  |x-2R\alpha_n|\leq R, |y-2R\beta_n|\leq R\}$.
The squares $\Lambda_n$ $(n\geq 1)$ give a tiling of the plane $\affine$.

We inductively define the sequence of Brody curves $f_n:\affine\to \affine P^N$ $(n\geq 0)$ as follows.
We set $f_0:=f$.
Suppose we have defined $f_n$.

\begin{enumerate}
 \item
   If $\norm{df}_{L^\infty(\Lambda_{n+1})}\geq \delta$, then we set $f_{n+1}:=f_n$.
 \item
  If $\norm{df}_{L^\infty(\Lambda_{n+1})} < \delta$ 
  and $\norm{d f_n}_{L^\infty(\Lambda_{n+1})}\geq \delta_0$, then we set 
  $f_{n+1}:=f_n$.
  \item 
  If $\norm{df}_{L^\infty(\Lambda_{n+1})}< \delta$ and $\norm{df_{n}}_{L^\infty(\Lambda_{n+1})} < \delta_0$,
 then we apply Proposition \ref{prop: gluing one rational curve}
 to $f_n$ and $p_{n+1}$ (note $D_R(p_{n+1})\subset \Lambda_{n+1}$)
 and get a holomorphic map $f_{n+1}: \affine \to \affine P^N$ satisfying the following (i), (ii), (iii).
 
\noindent 
(i) $\delta_0\leq \norm{df_{n+1}}_{L^\infty(D_R(p_{n+1}))}\leq 2/3$.

\noindent 
(ii) $\left||df_{n+1}|(z) - |df_n|(z)\right|\leq K/|z-p_{n+1}|^3$ over $|z-p_{n+1}| > R$.

\noindent 
(iii) $d(f_n(z), f_{n+1}(z))\leq K/|z-p_{n+1}|^3$ for $z\neq p_{n+1}$.
\end{enumerate}
For every $n\geq 1$, by (i) and (ii) 
\[ |df_n|(z)\leq \max(1-\tau, 2/3) +\sum_{k: |z-p_k|>R} \frac{K}{|z-p_k|^3} 
    \leq \max(1-\tau, 2/3) + \frac{\const\cdot K}{R^3}.\]
Here $\const$ is a positive constant independent of $n$.
We choose $R$ so large that the right hand side is bounded by 
$\max(1-\tau/2, 3/4) <1$.
Then all $f_n:\affine \to \affine P^N$ become Brody curves, and we can continue the above inductive 
construction infinitely many times.
Moreover, for all $n\geq 1$,
\begin{equation}  \label{eq: bound on spherical derivative of f_n}
  \norm{df_n}_{L^\infty(\affine)}\leq \max(1-\tau/2,3/4).
\end{equation}

For any compact set $\Omega\subset \affine$, by the condition (iii), there exists $n(\Omega)\geq 1$ such that 
\[ \sum_{n\geq n(\Omega)} \sup_{z\in \Omega}d(f_n(z),f_{n+1}(z)) 
   \leq \sum_{k: d(p_k, \Omega)\geq 1} \frac{K}{d(p_k,\Omega)^3} <+\infty.\]
Hence the sequence $f_n$ converges to a holomorphic curve $g:\affine \to \affine P^N$
uniformly over every compact subset of $\affine$.
From (\ref{eq: bound on spherical derivative of f_n}) we have 
$\norm{dg}_{L^\infty(\affine)}\leq \max(1-\tau/2,3/4)<1$.
We will prove that $g$ is non-degenerate and $\rho(g)\geq \rho(f)-\varepsilon$.

For proving the non-degeneracy of $g$, 
it is enough to show $\norm{dg}_{L^\infty(\Lambda_n)}\geq \delta/2$
for all $n\geq 1$. (See the condition (ii) of Definition-Lemma \ref{defintion-lemma: non-degenerate}.)

Case 1: If $|df|(z)\geq \delta$ for some $z\in \Lambda_n$, then 
\[ |dg|(z) \geq \delta -\sum_{k: k\neq n}\frac{K}{|z-p_k|^3} \geq \delta -\frac{\const\cdot K}{R^3}.\]
We can choose $R$ so large that $\norm{dg}_{L^\infty(\Lambda_n)}\geq \delta/2$.

Case 2: If $|df|(z) < \delta$ for all $z\in \Lambda_n$, then for some $k\in \{n-1, n\}$ and $w\in \Lambda_n$
we have $|df_k|(w)\geq \delta_0$.
Hence 
\[ |dg|(w) \geq \delta_0 -\sum_{l: l\neq n}\frac{K}{|w-p_l|^3} \geq \delta -\frac{\const\cdot K}{R^3} .\]
We can choose $R$ so large that $\norm{dg}_{L^\infty(\Lambda_n)}\geq \delta/2$.

We have proved that $g$ is non-degenerate.
Next we will prove $\rho(g)\geq \rho(f)-\varepsilon$.
For this sake, it is enough to prove that for every $n\geq 1$
\begin{equation}  \label{eq: step to rho(g) geq rho(f)-varepsilon}
 \frac{1}{(2R)^2}\int_{\Lambda_n} |dg|^2dxdy \geq \frac{1}{(2R)^2}\int_{\Lambda_n} |df|^2dxdy -\varepsilon.
\end{equation}

Case 1: If $\norm{df}_{L^\infty(\Lambda_n)}\geq \delta$, then for all $z\in \Lambda_n$
\begin{equation*}
   \left| |dg|^2(z) -|df|^2(z)\right| \leq 2\left| |dg|(z)-|df|(z)\right|  \leq 
   \sum_{k:k\neq n}\frac{2K}{|z-p_k|^3} \leq \frac{\const\cdot K}{R^3} \leq \varepsilon
\end{equation*}
for sufficiently large $R$.
Hence (\ref{eq: step to rho(g) geq rho(f)-varepsilon}) holds if we choose $R$ sufficiently large.

Case 2: If $\norm{df}_{L^\infty(\Lambda_n)} < \delta$, then (recall $\delta=\min(\delta_0,\sqrt{\varepsilon})$)
\[ \frac{1}{(2R)^2}\int_{\Lambda_n}|df|^2 dxdy \leq \delta^2 \leq \varepsilon.\]
Hence (\ref{eq: step to rho(g) geq rho(f)-varepsilon}) holds trivially.

Thus we have proved $\rho(g)\geq \rho(f)-\varepsilon$.
\end{proof}

\vspace{10mm}

\address{ Shinichiroh Matsuo \endgraf
Department of Mathematics, Kyoto University, Kyoto 606-8502, Japan}

\textit{E-mail address}: \texttt{matsuo@math.kyoto-u.ac.jp}

\vspace{0.5cm}

\address{ Masaki Tsukamoto \endgraf
Department of Mathematics, Kyoto University, Kyoto 606-8502, Japan}

\textit{E-mail address}: \texttt{tukamoto@math.kyoto-u.ac.jp}

\end{document}